\numberwithin{theorem}{section}
\definecolor{cffffff}{RGB}{255,255,255}
\definecolor{cff00c0}{RGB}{255,0,192}
\definecolor{c0f00ff}{RGB}{15,0,255}
\definecolor{c00ff20}{RGB}{0,255,32}
\definecolor{cfffd00}{RGB}{255,253,0}
\definecolor{c00e0ff}{RGB}{0,224,255}
\definecolor{deepblue}{rgb}{0,0,0.5}
\definecolor{deepred}{rgb}{0.6,0,0}
\definecolor{deepgreen}{rgb}{0,0.5,0}
\DeclareFixedFont{\ttb}{T1}{cmtt}{bx}{n}{10} 
\DeclareFixedFont{\ttm}{T1}{cmtt}{m}{n}{10}  
\DeclareFixedFont{\sttb}{T1}{cmtt}{bx}{n}{8} 
\DeclareFixedFont{\sttm}{T1}{cmtt}{m}{n}{8}  
\DeclareMathAlphabet{\mathpzc}{OT1}{pzc}{m}{it}
\newcommand{\R}{\mathbb{R}}
\newcommand{\Q}{\mathbb{Q}}
\newcommand{\I}{\mathbb{I}}
\newcommand{\N}{\mathbb{N}}
\newcommand{\curly}[1]{\mathscr{#1}}
\newcommand{\norm}[1]{\left\lVert#1\right\rVert}
\newcommand{\tab}{\hspace{1cm}}
\newtheorem{claim}[theorem]{Claim}
\title{An Algorithm for Computing Lipschitz Inner Functions in Kolmogorov's Superposition Theorem %
\thanks{Submitted to the editors \today. 
}}
\author{Jonas Actor %
\thanks{Department of Computational and Applied Mathematics, Rice University, Houston, TX 77005 (\email{jonasactor@rice.edu}).} %
\and Matthew G. Knepley %
\thanks{Department of Computer Science and Engineering, University of Buffalo, Buffalo, NY 14260 (\email{knepley@buffalo.edu}).}}
\begin{document}
\maketitle

\begin{abstract}

Kolmogorov famously proved in~\cite{kolmogorov} that multivariate continuous functions can be represented as a
superposition of a small number of univariate continuous functions, $$ f(x_1,\dots,x_n) = \sum_{q=0}^{2n+1} \chi^q
\left( \sum_{p=1}^n \psi^{pq}(x_p) \right).$$ Fridman~\cite{fridman} posed the best smoothness bound for the functions
$\psi^{pq}$, that such functions can be constructed to be Lipschitz continuous with constant 1. Previous algorithms to
describe these inner functions have only been H\"older continuous, such as those proposed by K\"oppen in~\cite{koppen}
and Braun and Griebel in~\cite{braun-griebel}. This is problematic, as pointed out by Griebel~\cite{griebel-highdim}, in
that non-smooth functions have very high storage/evaluation complexity, and this makes Kolmogorov's representation (KR)
impractical using the standard definition of the inner functions.

To date, no one has presented a method to compute a Lipschitz continuous inner function. In this paper, we revisit
Kolmogorov's theorem along with Fridman's result. We examine a simple Lipschitz function which appear to satisfy the
necessary criteria for Kolmogorov's representation, but fails in the limit. We then present a full solution to the
problem, including an algorithm that computes such a Lipschitz function.
\end{abstract}

\begin{keywords}
Kolmogorov Superposition Theorem, superposition of functions, function representation, dimension reduction
\end{keywords}

\begin{AMS}
26B04, 41A04, 65D05
\end{AMS}

\section{Kolmogorov's Superposition Theorem}

Kolmogorov proved the following theorem in 1957.

\begin{theorem}[KST \cite{kolmogorov}] \label{theorem:KST}
  Let $f: \R^n \rightarrow \R \in C\left([0,1]^n \right)$ where $n \ge 2$. Then, there exist $\psi^{pq}: [0,1] \rightarrow [0,1] \in C[0,1]$ and $\chi_q: \R \rightarrow \R \in C(\R)$,
   where $p \in \{ 1, \dots ,n \} $ and $q \in \{ 0,\dots,2n \} $, such that
  \begin{displaymath} f(x_1,\dots,x_n) = \sum_{q=0}^{2n} \chi^q \left( \sum_{p=1}^n \psi^{pq}(x_p) \right). \end{displaymath}
\end{theorem}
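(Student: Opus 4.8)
The plan is to exploit the two features that make Theorem~\ref{theorem:KST} remarkable: the inner functions $\psi^{pq}$ can be chosen \emph{universally}, depending only on $n$ and not on $f$, and the outer functions $\chi^q$ are then recovered for a given $f$ by a convergent successive-approximation scheme. I would therefore split the argument into (i) a once-and-for-all construction of the inner functions together with a combinatorial covering lemma, and (ii) an iterative reduction in which the $\chi^q$ arise as uniformly convergent series of univariate corrections.

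\textbf{Inner functions and the covering lemma.} For each $q \in \{0,\dots,2n\}$ I would take a monotone, strictly increasing ``staircase'' $\psi^q \colon [0,1] \to [0,1]$ that is nearly constant on a family of disjoint plateaus and rises steeply across narrow transition bands, and set $\psi^{pq}(x_p) = \lambda_p\, \psi^q(x_p)$ with positive weights $\lambda_1,\dots,\lambda_n$ satisfying $\sum_p \lambda_p \le 1$ and linearly independent over $\Q$. Rational independence guarantees that $x \mapsto \sum_{p=1}^{n} \psi^{pq}(x_p)$ maps distinct product-plateaus of the $q$-th grid to \emph{disjoint} intervals of the line, so a univariate function composed with this map may be prescribed freely on each cube. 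The $2n+1$ grids are mutual translates, chosen so that in each single coordinate the $2n+1$ transition bands are pairwise disjoint; then one coordinate can place $x$ inside a transition band for at most one value of $q$, so across all $n$ coordinates at most $n$ grids are spoiled. This is where the count $2n+1$ is essential: every $x \in [0,1]^n$ lies in the plateau region of a ``good'' cube for at least $n+1$ of the $2n+1$ grids. To handle arbitrary $f$ with fixed $\psi^{pq}$, the staircases must realize these disjointness and covering properties at a whole sequence of resolutions $h_k \to 0$ simultaneously, via a nested, self-similar construction.

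\textbf{Reduction and iteration.} Given $f$ with $\norm{f}_\infty \le 1$, I would pick a resolution fine enough (relative to the modulus of continuity of $f$) and, for each $q$, define a continuous $g_q \in C(\R)$ equal to $\tfrac{1}{2n+1} f(\text{cube center})$ on the image interval of each good cube of the $q$-th grid and interpolated in the gaps, so that $\norm{g_q}_\infty \le \tfrac{1}{2n+1}\norm{f}_\infty$. Summing over the $\ge n+1$ good grids reproduces most of $f(x)$ while the $\le n$ bad terms are each bounded by $\tfrac{1}{2n+1}\norm{f}_\infty$, yielding
\[ \norm{ f - \sum_{q=0}^{2n} g_q\left( \sum_{p=1}^{n} \psi^{pq}(x_p) \right) }_\infty \le \theta \, \norm{f}_\infty \]
for a fixed $\theta = \theta(n) \in (0,1)$. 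Replacing $f$ by the residual and repeating at an ever-finer resolution produces corrections $g_q^{(k)}$ with $\norm{g_q^{(k)}}_\infty \le \tfrac{\theta^k}{2n+1}\norm{f}_\infty$; the series
\[ \chi^q := \sum_{k=0}^{\infty} g_q^{(k)} \]
then converges uniformly to a continuous function, and telescoping the residual bounds gives exact equality $f = \sum_{q} \chi^q\bigl(\sum_p \psi^{pq}(x_p)\bigr)$.

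\textbf{Main obstacle.} The routine part is the geometric iteration; the crux is constructing the inner functions and proving the covering lemma at the same time. One must choose the staircases and the rationally independent weights $\lambda_p$ so that \emph{both} the disjointness of the image intervals \emph{and} the ``$n+1$ of $2n+1$'' covering hold uniformly, and so that these properties persist at every resolution $h_k$ used in the iteration while keeping the single fixed $\psi^{pq}$ genuinely continuous (hence with steep but bounded transition bands). Reconciling the universality of the $\psi^{pq}$ with the need for arbitrarily fine separation, without letting the transition regions accumulate enough to violate the covering count, is the delicate heart of the proof.
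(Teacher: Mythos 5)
Your proposal follows essentially the same route as the paper: universal inner functions obtained from a nested system of $2n+1$ shifted interval-grids whose product cubes have disjoint images under $\Psi^q$ and contain every point for at least $n+1$ of the shifts (the paper's Lemmas \ref{lemma:cubes} and \ref{lemma:minsep}), followed by the standard successive-approximation construction of the $\chi^q$ with a geometric contraction factor (the paper's appendix, which normalizes the correction by $\tfrac{1}{n+1}$ rather than your $\tfrac{1}{2n+1}$, giving a sharper ratio $\tfrac{2n+1}{2n+2}$ but the same telescoping argument). Like the paper, you defer the genuinely hard step --- building inner functions whose covering and separation properties hold at every resolution simultaneously --- which you correctly identify as the crux and which is precisely what the remainder of the paper addresses.
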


We could add to this statement that the ``inner'' functions $\psi^{pq}$ are independent of choice of function $f$. These inner functions can be chosen to be Lipschitz continuous with a Lipschitz constant of 1 \cite{fridman}. Sprecher in \cite{sprecher1972} reformulates this theorem by replacing the functions $\psi^{pq}$ with translations and scaling of a single function $\psi$, which can still be chosen to be Lipschitz continuous. In this formulation, theorem \ref{theorem:KST} becomes

\begin{theorem}{Sprecher's KST Reformulation \cite{sprecher1972}}
  Let $f: \R^n \rightarrow \R \in C\left([0,1]^n \right)$ where $n \ge 2$. Fix $\epsilon \le \frac{1}{2n}$, and choose $\lambda \in \R$ such that $1 = \lambda^0, \lambda^1, \dots, \lambda^{n-1}$
  are integrally independent. Then, there exist $\psi: [-1,1] \rightarrow [0,1] \in C[-1,1]$ and $\chi_q: \R \rightarrow \R \in C(\R)$ for $q \in \{0,\dots,2n\}$, such that
  $$ f(x_1,\dots,x_n) = \sum_{q=0}^{2n} \chi^q \left( \sum_{p=1}^n \lambda^p \psi(x_p + q \epsilon) \right).$$
  \label{theorem:KST-Sprecher}
\end{theorem}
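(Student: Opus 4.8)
The plan is to follow the two-stage strategy common to proofs of Kolmogorov-type theorems (in the spirit of Kahane, Lorentz, and Sprecher himself): first fix a single universal inner function $\psi$ that enjoys a strong \emph{separation} property, and only afterward build the outer functions $\chi^q$ by an iterative approximation scheme. Throughout, abbreviate the combined inner map attached to index $q$ by $\xi_q(x_1,\dots,x_n) = \sum_{p=1}^n \lambda^p\,\psi(x_p + q\epsilon)$, so that the target identity reads $f = \sum_{q=0}^{2n}\chi^q\circ\xi_q$. Essentially all the difficulty is front-loaded into the construction of $\psi$; once $\psi$ separates the right sets, the outer functions follow from a routine contraction argument.

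First I would construct $\psi$ as a continuous, nondecreasing function with range in $[0,1]$, defined through a base-$\gamma$ digit expansion as in Sprecher's original work. Partition the interval into a fine rational mesh and build $\psi$ so that it is nearly constant (a ``plateau'') on each mesh interval, with short monotone transitions in between; the plateau value encodes the index of the interval. The hypothesis that $1=\lambda^0,\lambda^1,\dots,\lambda^{n-1}$ are integrally independent is exactly what guarantees that the weighted sum $\sum_{p=1}^n\lambda^p\psi(x_p)$ recovers the tuple of interval indices, in the manner of a mixed-radix expansion: distinct mesh cubes are therefore sent by $\xi_q$ into \emph{pairwise disjoint} closed subintervals of $\R$. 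This disjointness is the separation property that makes an outer function well defined on the image.

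Next comes the covering step, which is where the count $2n+1$ enters. Because we translate by $q\epsilon$ for $q=0,\dots,2n$ with $\epsilon\le\frac{1}{2n}$, a pigeonhole argument shows that every point $x$ of the cube lies in the \emph{interior} of a mesh cube (away from the transition zones, where separation could fail) for at least $n+1$ of the $2n+1$ indices $q$. A majority of the terms are thus ``good'' at every point, so the finitely many bad indices can always be outvoted. With $\psi$ and this covering fixed, I would then prove the one-step approximation lemma: for any $g\in C([0,1]^n)$ there exist $\chi^q\in C(\R)$, each defined on the disjoint image intervals as a fixed fraction of the local average of $g$ and extended continuously across the gaps, such that $\norm{g-\sum_{q=0}^{2n}\chi^q\circ\xi_q}_\infty \le \theta\,\norm{g}_\infty$ for some fixed $\theta\in(0,1)$ depending only on $n$. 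Applying this lemma to $f$, then to the residual $f-\sum_q\chi^q\circ\xi_q$, and iterating, produces geometrically decaying corrections whose sums $\chi^q=\sum_k\chi^q_k$ converge uniformly; the limits are continuous and represent $f$ exactly.

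The main obstacle is realizing the separation lemma and the covering property \emph{simultaneously} while keeping $\psi$ a bona fide continuous function into $[0,1]$. Encoding mesh indices into plateau values forces the transition regions to shrink, and as the iteration refines the mesh this degrades the modulus of continuity of $\psi$; controlling that degradation is precisely the H\"older-versus-Lipschitz issue that motivates this paper. A second, subtler point is verifying the pigeonhole count (at least $n+1$ good indices out of $2n+1$) uniformly over the cube for the admissible range $\epsilon\le\frac{1}{2n}$, since this combinatorial fact is what ultimately forces the number of outer terms to be $2n+1$ and drives the contraction constant $\theta$ below $1$.
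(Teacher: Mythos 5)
Your proposal follows essentially the same route as the paper: a covering lemma guaranteeing each point lies in at least $n+1$ of the $2n+1$ shifted cube families, a separation lemma using the integral independence of the $\lambda^p$ so that distinct cubes have disjoint images under $\Psi^q$, and then the standard iterative construction of the $\chi^q$ with a contraction factor (the paper's appendix gives $\frac{2n+1}{2n+2}$) yielding uniform convergence. This matches the paper's Lemma 4.1 together with its appendix proof, and you correctly flag the real difficulty---keeping $\psi$ continuous while the separation survives the limit---which is exactly the issue the rest of the paper is devoted to.
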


Previous scholars, notably K\"oppen in \cite{koppen} and Braun and Griebel in \cite{braun-griebel}, are able to construct H\"older continuous inner functions, but no research has shown how to compute a Lipschitz continuous inner function. Before proceeding to discuss constructions of satisfactory Lipschitz functions, we first outline Kolmogorov's original proof of \ref{theorem:KST}, which will be pertinent to our later analysis.

\section{Kolmogorov's Original Proof}

Kolmogorov begins by dividing the line into intervals separated by gaps, which he denotes by $A_i$ with $i$ numbering
the intervals. He then replicates this division $2n + 1$ times where $n$ is the dimension, but shifts it so that the
gaps do not line up. Indexing the replicates by $q$, he now has $A^q_i$. To decompose the unit hypercube, he
takes all possible products of intervals, making small cubes $\left\{ S^q_{i_1,\dots,i_n} = \prod^n_{p=1}
A^q_{i_p}\,\,\vert\,\, 1 \le i_p \le m,\, 0 \le q \le 2n \right\}$. In two dimensions, for each $q$ we get what looks
like a system of city blocks separated by roads, which led Arnold to term them ``towns''~\cite{arnold,tikhomirov}. As a
last step, he makes a series of refinements to the line division, indexed by $j$, so that we have $A^q_{j,i}$ and carry
out the same construction for each level of refinement. We will henceforth refer to $j$ as the \textit{level} of
refinement. The idea is then to approximate part of the function on each shift so that they add up the right value, with
the gaps allowing us to keep the functions continuous.

A general proof of theorem \ref{theorem:KST} requires the following lemmas to define the inner functions of a KST representation, first stated in \cite{kolmogorov}:

  \begin{lemma} For each $q \in \{0,\dots,2n\}$ and at each refinement level $j \in \N$, there exists a system of cubes
    $$ \curly{S}^q_j = \left\{ S_{j;i_1,\dots,i_n}^q = \prod_{p=1}^n A^q_{j,i_p}\,\,\vert\,\, 1 \le i_p \le m_j,\, 0 \le q \le 2n \right\}$$
     that nearly cover the unit cube $I^n$, such that for any $x \in I^n$, there are $n+1$ values for $q$ such that $\curly{S}^q_j$ includes $x$.
    Additionally, $\forall q \in \{0,\dots,2n\}, \forall S \in \curly{S}^q_j,\,\, \text{Diam}[S] \rightarrow 0$ as $j \rightarrow \infty$.
    \label{lemma:cubes}
  \end{lemma}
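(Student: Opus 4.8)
The plan is to reduce the $n$-dimensional covering problem to a one-dimensional one and then close the argument with a counting estimate across coordinates. By the definition of the product cubes, a point $x = (x_1,\dots,x_n)$ lies in some cube of the system $\curly{S}^q_j$ if and only if every coordinate $x_p$ lies in one of the intervals $A^q_{j,i_p}$, that is, if and only if no coordinate of $x$ falls in a gap of the $q$-th one-dimensional division. Thus $x$ fails to be covered by $\curly{S}^q_j$ exactly when at least one coordinate lands in a gap. The desired conclusion, that $x$ is included in at least $n+1$ of the $2n+1$ systems, is therefore equivalent to showing that at most $n$ values of $q$ are \emph{bad} for $x$, where $q$ is called bad when some $x_p$ lies in a gap of division $q$.

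To build the one-dimensional divisions, at refinement level $j$ I would fix a period $P_j \to 0$, an interval length $a_j$, and a gap length $b_j$ with $P_j = a_j + b_j$ and the single structural constraint $b_j \le P_j/(2n+1)$, equivalently $a_j \ge 2n\, b_j$; to make each individual system omit only a small-measure set I would additionally take $b_j/P_j$ small. The base division of $[0,1]$ consists of intervals of length $a_j$ separated by gaps of length $b_j$, and the $q$-th division $\{A^q_{j,i}\}$ is this base division translated by $q\,P_j/(2n+1)$ for $q \in \{0,\dots,2n\}$.

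The key one-dimensional claim is that, for every $t \in [0,1]$, $t$ lies in a gap for at most one value of $q$. Reducing modulo $P_j$, the gap of division $q$ is an interval of length $b_j$ whose left endpoint is offset by $q\,P_j/(2n+1)$; since consecutive offsets are spaced $P_j/(2n+1)$ apart and $b_j \le P_j/(2n+1)$, these $2n+1$ gap-intervals exactly tile without overlap within one period, so no single $t$ can lie in two of them. Given this, each of the $n$ coordinates of $x$ is in a gap for at most one value of $q$, hence at most $n$ values of $q$ are bad and at least $(2n+1) - n = n+1$ are good, which is the required multiplicity. The diameter bound is then immediate, since every cube in $\curly{S}^q_j$ has side at most $a_j \le P_j$, giving $\mathrm{Diam}[S] \le \sqrt{n}\,P_j \to 0$ as $j \to \infty$; and "nearly covers" follows because the part of $I^n$ missed by a single system is contained in the set where some coordinate lies in a gap, of measure at most $n\,b_j/P_j$, which is small.

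The genuinely load-bearing step is this one-dimensional disjointness claim together with the coordinatewise counting that upgrades "at most one bad $q$ per coordinate" to "at most $n$ bad $q$ overall"; once the inequality $b_j \le P_j/(2n+1)$ is recognized as the right constraint, the selection of $a_j, b_j, P_j$ is routine. The main technical nuisance I anticipate is the bookkeeping at the endpoints $0$ and $1$, where the periodic gap pattern must be truncated without introducing a spurious gap in which a coordinate could fall for a second value of $q$; I would handle this by shrinking the outermost intervals or extending the periodic pattern slightly beyond $[0,1]$ before restricting to $1 \le i_p \le m_j$.
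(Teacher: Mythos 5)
Your proof is correct, and it is essentially the argument the paper relies on: the paper itself does not prove Lemma~\ref{lemma:cubes} (it defers to Kolmogorov and later restates the same facts as the ``easy to verify'' Lemma~\ref{lemma:cubesBadFxn} and condition~2 of Lemma~\ref{lemma:criterion}), but its Section~3 construction --- periodic intervals with gap width at most the shift, so that each coordinate misses at most one of the $2n+1$ shifted divisions, followed by the union bound over the $n$ coordinates giving at least $(2n+1)-n = n+1$ good values of $q$ --- is exactly your one-dimensional disjointness claim plus coordinatewise counting. The only nitpick is that your $2n+1$ gap intervals ``exactly tile'' a period only when $b_j = P_j/(2n+1)$; for $b_j$ strictly smaller they are merely pairwise disjoint, which is all you need.
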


\begin{lemma} There exist functions $\psi^{pq}$ such that for each $q$ and any $j \in \N$, the function $\Psi^q: \I^n \rightarrow \I$ defined as $\Psi^q(x_1,\dots,x_n) = \sum_{p=1}^n \psi^{pq}(x_p)$  satisfies the property that for any $S_1, S_2 \in \curly{S}^q_j$,
$$\Psi^q(S_1) \cap \Psi^q(S_2) = \emptyset. $$ \label{lemma:minsep}
\end{lemma}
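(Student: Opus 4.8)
The plan is to construct each inner function $\psi^{pq}$ as the uniform limit of a sequence of monotone functions $\psi^{pq}_j$ built level by level, and to show that the separation property holds at every level $j$ for the limit. The guiding picture is that $\psi^{pq}$ should be nearly constant on every interval $A^q_{j,i}$ and should do almost all of its increasing inside the gaps between consecutive intervals; then the image $\psi^{pq}(A^q_{j,i})$ is a narrow band, and the gaps on the line translate into gaps between bands.

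First I would reduce the disjointness of cube images to a purely combinatorial statement about sums of values. Writing $v^{pq}_{j,i}$ for the (near-constant) value of $\psi^{pq}_j$ on $A^q_{j,i}$, the image of a cube $S^q_{j;i_1,\dots,i_n}$ under $\Psi^q_j$ is an interval centered near $\sum_{p=1}^n v^{pq}_{j,i_p}$ whose width is controlled by the total oscillation of the $\psi^{pq}_j$ across the intervals. If I make that oscillation smaller than half the minimum separation between distinct sums, then two cubes with distinct index tuples have disjoint images. So the real task is to choose the values $v^{pq}_{j,i}$ so that the map $(i_1,\dots,i_n)\mapsto \sum_{p=1}^n v^{pq}_{j,i_p}$ is injective with a uniform positive gap $\delta_j>0$ over all $m_j^n$ tuples.

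Next I would produce such values by an arithmetic encoding. This is exactly the role played by the integrally independent powers $\lambda^p$ in Sprecher's reformulation (Theorem \ref{theorem:KST-Sprecher}): scaling the $p$-th coordinate's values by $\lambda^p$ forces the sums to be distinct, and one extracts a quantitative separation $\delta_j$ from integral independence (a Sidon-type, distinct-subset-sum property). Equivalently one can assign rational addresses in a base large enough that the $n$ summands never carry into one another. Either route yields the desired injective, $\delta_j$-separated sum map at each fixed level.

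The remaining, and I expect hardest, point is that a \emph{single} family $\{\psi^{pq}\}$ must realize the separation simultaneously for every refinement level $j$, even though passing from level $j$ to level $j+1$ subdivides the intervals and hence must alter the functions. I would handle this inductively with a reserve-a-gap argument: having secured a positive gap $\delta_j$ at level $j$, I only modify $\psi^{pq}_j$ inside the old intervals by perturbations bounded by a summable sequence $\eta_j$ chosen so that the total future change to $\Psi^q$ satisfies $\sum_{k\ge j} n\,\eta_k < \delta_j/2$; then the level-$j$ disjointness is never destroyed while level-$(j+1)$ disjointness is installed. Monotonicity is preserved throughout, the perturbations are summable, so $\psi^{pq}_j \to \psi^{pq}$ uniformly to continuous limits, and $\mathrm{Diam}[S]\to 0$ from Lemma \ref{lemma:cubes} keeps each $\Psi^q(S)$ an interval. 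Passing to the limit gives $\Psi^q(S_1)\cap\Psi^q(S_2)=\emptyset$ for all distinct $S_1,S_2\in\curly{S}^q_j$ and every $j$, as required. The genuinely nontrivial ingredients are the arithmetic encoding that separates the sums and the bookkeeping that keeps all levels separated at once.
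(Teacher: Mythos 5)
Your proposal is correct and follows essentially the same route as the paper, which reduces Lemma \ref{lemma:minsep} to the sufficient conditions of Lemma \ref{lemma:lambdas}: your narrow bands of values $v^{pq}_{j,i}$ with separated sums are exactly the constants $\lambda^{pq}_{j,i}$ with the pairwise disjoint intervals $\Delta^q_{j;i_1,\dots,i_n}$, and your reserve-a-gap induction across levels is precisely condition 2 there ($\lambda^{pq}_{j,i} \le \lambda^{pq}_{j+1,i'} \le \lambda^{pq}_{j,i} + \epsilon_j - \epsilon_{j+1}$). The paper itself does not carry out the details (it defers to Kolmogorov), so your write-up is simply a fleshed-out version of the same argument.
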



We do not prove these lemmas here, but note that the following lemma is sufficient to prove \ref{lemma:minsep}:
\begin{lemma}
The constants $\lambda^{pq}_{j,i}$ and $\epsilon_j$ can be chosen so that the following conditions hold:
\begin{enumerate}
  \item $\lambda^{pq}_{j,i} < \lambda^{pq}_{j,i+1} \le \lambda^{pq}_{j,i} + \frac{1}{2^k}$.
  \item $\lambda^{pq}_{j,i} \le \lambda^{pq}_{j+1,i'} \le \lambda^{pq}_{j,i} + \epsilon_j - \epsilon_{j+1}$ if the closed intervals $A^q_{j,i} \cap A^q_{j+1,i'} = \emptyset$.
  \item The closed intervals $\Delta^q_{j; i_1,\dots,i_n} = \left[ \sum_{p=1}^n \lambda^{pq}_{j,i_p}, \sum_{p=1}^n \lambda^{pq}_{j,i_p} + n \epsilon_k  \right]$ are pairwise disjoint for fixed $j$ and $q$.
\end{enumerate}
Then, for fixed $p,q$, the following condition uniquely determines a continuous function $\psi^{pq}$ on $[0,1]$:
$$\lambda^{pq}_{j,i} \le \psi^{pq}(x) \le \lambda^{pq}_{j,i+1} \text{ for } x \in A^q_{j,i}.$$
\label{lemma:lambdas}
\end{lemma}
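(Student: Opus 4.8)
The plan is to split the statement into two tasks: first, an inductive construction of the constants $\lambda^{pq}_{j,i}$ and $\epsilon_j$ realizing conditions (1)--(3); and second, a verification that those constants pin down a unique continuous $\psi^{pq}$. I would fix the gap sequence $\{\epsilon_j\}$ at the outset as a strictly decreasing positive sequence with $\epsilon_j \to 0$, chosen to decay fast enough relative to the growth of the interval count $m_j$ supplied by Lemma \ref{lemma:cubes}; the precise decay rate is dictated by condition (3) and will be fixed during the construction. Here I read the index $k$ in the statement as the level index $j$, so that condition (1) bounds each increment by $2^{-j}$ and condition (3) uses intervals of length $n\epsilon_j$.

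For the construction I would induct on the refinement level $j$. At the base level I assign to each interval index $i$ a value $\lambda^{pq}_{0,i}$ that is increasing in $i$ with increments at most $1$ [condition (1)], and I arrange the assignments across the $n$ coordinates $p$ so that the $(m_0)^n$ sums $\sum_{p=1}^n \lambda^{pq}_{0,i_p}$ are all distinct and separated by more than $n\epsilon_0$, which is exactly condition (3) at level $0$. The cleanest way to force this separation is to place the $\lambda$ values on per-coordinate scalings that are integrally independent, as in Sprecher's reformulation (Theorem \ref{theorem:KST-Sprecher}), so that $(i_1,\dots,i_n)\mapsto \sum_p \lambda^{pq}_{0,i_p}$ is injective with a quantitative gap. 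In the inductive step, condition (2) confines each refined value $\lambda^{pq}_{j+1,i'}$ to a window of width $\epsilon_j-\epsilon_{j+1}$ anchored at its parent value, and within these windows I place the new values so as to restore both the increment bound $2^{-(j+1)}$ [condition (1)] and the disjointness of the shrunken image intervals $\Delta^q_{j+1;\,i_1,\dots,i_n}$ of length $n\epsilon_{j+1}$ [condition (3)].

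The main obstacle is condition (3), because it is a global constraint that couples all $n$ coordinates through the sum, whereas conditions (1) and (2) are local to each pair $(p,i)$. Fitting $(m_j)^n$ image intervals of length $n\epsilon_j$ into a bounded range without overlap is fundamentally a packing problem, and at each refinement it must be solved inside the narrow windows of width $\epsilon_j-\epsilon_{j+1}$ left open by condition (2). The balance to strike is that $\epsilon_j$ must shrink quickly enough that $(m_j)^n\cdot n\epsilon_j$ stays within the available range, yet the difference $\epsilon_j-\epsilon_{j+1}$ must remain large enough to house the finer lattice of values. I expect the argument to hinge on choosing $\{\epsilon_j\}$ with geometric or faster decay tied to $m_j$, together with the fact that for fixed $q$ the cubes $S^q_{j;\,i_1,\dots,i_n}$ are spatially separated by the gaps of Lemma \ref{lemma:cubes}, so that only spatially close cubes ever compete for nearby image slots.

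Finally, for the function, conditions (1) and (2) together make the constraint bands $[\lambda^{pq}_{j,i},\lambda^{pq}_{j,i+1}]$ consistent across levels and of width at most $2^{-j}\to 0$, so at every point lying in some $A^q_{j,i}$ for infinitely many $j$ the nested bands intersect in a single value, which defines $\psi^{pq}$ there. Since Lemma \ref{lemma:cubes} forces $\mathrm{Diam}[S]\to 0$, the intervals $A^q_{j,i}$ are dense in $[0,1]$, and the increment bound furnishes a modulus of continuity, so $\psi^{pq}$ extends uniquely to a continuous function on all of $[0,1]$; density together with continuity also yields uniqueness, since any two continuous functions obeying the bands agree on the dense set and hence everywhere.
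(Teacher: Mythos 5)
The paper never actually proves this lemma: immediately after stating it the authors write that the proof ``proceeds following \cite{kolmogorov}'' and ``is not completed here,'' and the appendix supplies only the outer-function induction, not the construction of the $\lambda^{pq}_{j,i}$. So there is no in-paper proof to compare against step by step. The nearest analogue is the proof of Lemma \ref{lemma:lambdasBadFxn}, where the corresponding conditions are verified by direct computation for the explicit choice $\lambda^{p,q}_{k,i}=\alpha_p\left(i/\gamma^k+q\epsilon\right)$ with rationally independent $\alpha_p$. Your scaffolding --- induct on the level $j$, read the stray index $k$ as $j$, fix $\epsilon_{j+1}$ only after the level-$(j+1)$ values so that condition (3) can be met, and obtain the unique continuous $\psi^{pq}$ from nested shrinking bands on a dense union of intervals, extended by continuity --- is the standard Kolmogorov argument and is sound in outline.

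Two concrete problems. First, your proposed mechanism for condition (3), placing the $\lambda$'s on integrally independent per-coordinate scalings ``as in Sprecher's reformulation,'' is exactly the construction that Section 3 of this paper is devoted to debunking. Rational independence gives only that the sums $\sum_{p}\lambda^{pq}_{j,i_p}$ are pairwise \emph{distinct}; disjointness of the closed intervals $\Delta^q_{j;i_1,\dots,i_n}$ of length $n\epsilon_j$ needs a quantitative gap, which you must buy by shrinking $\epsilon_j$ \emph{after} the finitely many sums are fixed (so a positive minimum gap exists) --- say this explicitly, since otherwise condition (3) is simply not established. Worse, even once (1)--(3) hold at every level, the paper's counterexample shows that the limit of the linear construction fails the separation property of Lemma \ref{lemma:minsep}, which is the entire reason Lemma \ref{lemma:lambdas} is stated; so if your construction collapses to the linear one, you prove the letter of the lemma while losing the property it exists to deliver. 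Second, the ``balance'' you describe between $\epsilon_j-\epsilon_{j+1}$ and housing the finer lattice is not a real tension: condition (1) only upper-bounds the increments, so the finer values can always be compressed into an arbitrarily narrow window, and $\epsilon_j-\epsilon_{j+1}$ only grows as $\epsilon_{j+1}$ shrinks. The one-sided freedom is what makes the induction go through; the genuine subtlety is the one the paper highlights, namely that level-by-level separation can evaporate in the limit, and your proposal does not engage with it.
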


The rest of the proof proceeds following \cite{kolmogorov}. As this paper focuses on constructing the inner function for KST representation, the proof is not completed here but can be found in the appendix.

\section{A Misleading Candidate for a Lipschitz Inner Function}

One's first thought would be that to enforce lemma \ref{lemma:minsep}, it suffices to construct a Lipschitz monotonic function that separates out values on each of the squares, choosing the values of our function on those squares so that they do not coincide. We will construct such a function to illuminate why this alone fails to satisfy the conditions necessary for theorem \ref{theorem:KST}.
\\

We define intervals following Kolmogorov's idea of uniform spacing with shrinking diameters, combined with Sprecher's idea of decimal representation. Fix $n$. Let $\gamma \ge 2n+2$ be our base for decimal expansion. Let $\curly{D}_k$ be the set of rational numbers whose rational expansions in base $\gamma$ terminate at or before the $k^{th}$ decimal place. The set $\curly{D} = \cup_{k \in \N} \curly{D}_k$ is dense in $\R$.

Choose $\epsilon \in \left(\frac{1}{\gamma^2},\frac{1}{\gamma}\right)$. Let $\alpha_1 = 1$ and $\alpha_p = 2^{\frac{p-1}{n}}$ for $2 \le p \le n$.
For each $k \in \N$, define for each $d_k \in \curly{D}_k$ a corresponding interval $$A^0[d_k] = \left[ d_k, d_k + \frac{\gamma^2 - 1}{\gamma^{k+2}}  \right].$$
Then, for all $q \in \{1,\dots,2n\}$, define $$ A^{q}[d_k] = \left\{ x + q \epsilon \,\,\vert\,\, x \in A^0[d_k] \right\}.$$
Let $$\curly{A}^q_k = \left\{ A^q[d_k] \cap [0,1] \,\,\vert\,\, d_k \in \curly{D}_k \right\}.$$
Each interval in $A^q[d_k]$ has length $\frac{\gamma^2 - 1}{\gamma^{k+2}}$, and for each $k$ and a fixed $q$, the system of corresponding intervals is a translation of the original by a distance $\frac{1}{\gamma^k}$, with a gap of length $\frac{1}{\gamma^{k+2}}$ between each interval. For each $k \in \N$, for any $x \in [0,1]$, there are $2n$ values of $q$ (out of $2n+1$) such that $x \in \curly{A}^q_k$.
This type of construction is demonstrated in Fig.~\ref{fig:uniform}.

For each $q = 0,\dots,2n$ define $$S^q[d^{(1)}_k,\dots,d^{(n)}_k] = \prod_{p=1}^n A^q[d^{(p)}_k],$$ where $d^{(p)}_k \in \curly{D}_k$ for each $p = 1,\dots,n$ and the product denotes Cartesian product. Let $$\curly{S}^q_k = \left\{ S^q[d^{(1)}_k,\dots,d^{(n)}_k] \cap \I^n \,\,\vert\,\, d^{(p)}_k \in \curly{D}_k,\, p = 1,\dots n\right\}.$$
The following lemma is easy to verify.
\begin{lemma} For any $k \in \N$ and for each $x \in \I^n$, there are $n+1$ values of $q$ such that $x \in \curly{S}^q_k$. \label{lemma:cubesBadFxn}
\end{lemma}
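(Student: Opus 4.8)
The plan is to prove Lemma~\ref{lemma:cubesBadFxn} by a direct counting argument that reduces the $n$-dimensional covering question to the one-dimensional covering property of the interval systems $\curly{A}^q_k$ already established in the text. The statement asserts that for any fixed refinement level $k$ and any point $x = (x_1,\dots,x_n) \in \I^n$, exactly $n+1$ of the $2n+1$ index values $q \in \{0,\dots,2n\}$ have the property that $x$ lies in some cube of $\curly{S}^q_k$. The key observation is that, by the product structure $S^q[d^{(1)}_k,\dots,d^{(n)}_k] = \prod_{p=1}^n A^q[d^{(p)}_k]$, the point $x$ belongs to a cube in $\curly{S}^q_k$ if and only if \emph{each} coordinate $x_p$ belongs to some interval in $\curly{A}^q_k$. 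So the $n$-dimensional membership factors coordinatewise, and the whole lemma is controlled by the per-coordinate covering pattern.

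First I would record the one-dimensional fact already stated in the excerpt: for each $k$ and each coordinate value $x_p \in [0,1]$, there are exactly $2n$ values of $q$ (out of the $2n+1$ total) for which $x_p \in \curly{A}^q_k$, equivalently exactly one value of $q$, call it $q_p^{\ast}$, for which $x_p$ falls in the gap and hence lies in \emph{no} interval of $\curly{A}^q_k$. This is the heart of the construction: the $2n+1$ shifts by $q\epsilon$ are arranged so that at any point the gaps (each of length $\frac{1}{\gamma^{k+2}}$) of the different shifts never overlap, so precisely one shift misses each point. I would then translate the product criterion into the language of these ``missing'' indices: $x$ lies in a cube of $\curly{S}^q_k$ iff $x_p \in \curly{A}^q_k$ for every $p$, i.e.\ iff $q \ne q_p^{\ast}$ for every $p=1,\dots,n$. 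Thus $x$ fails to be covered by $\curly{S}^q_k$ exactly for those $q$ appearing among the $n$ values $q_1^{\ast},\dots,q_n^{\ast}$.

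The counting then finishes the argument. Since each coordinate contributes exactly one bad index $q_p^{\ast}$, the set of ``bad'' indices $\{q_1^{\ast},\dots,q_n^{\ast}\}$ has \emph{at most} $n$ distinct elements, so at least $(2n+1) - n = n+1$ values of $q$ cover $x$. The remaining point is to show the bad indices are in fact distinct so that the count is \emph{exactly} $n+1$; this is where I expect the one real subtlety to lie. This requires that no single value of $q$ can be the missing shift for two different coordinates simultaneously at the same $x$ — in other words, that the $n$ gap-conditions fall on $n$ distinct shifts. I would argue this from the geometry of the uniform spacing: because $\epsilon \in \left(\frac{1}{\gamma^2},\frac{1}{\gamma}\right)$ and the gaps have width only $\frac{1}{\gamma^{k+2}}$, the shift offsets $q\epsilon$ are spread widely enough that a fixed real can lie in the gap of at most one shift, and the coordinatewise conditions are therefore independent across $p$; hence the $q_p^{\ast}$ are forced to be pairwise distinct and the covered count is exactly $(2n+1) - n = n+1$.

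The main obstacle, then, is not the dimensional bookkeeping (which factors cleanly through the Cartesian product) but verifying this distinctness/independence of the per-coordinate gap indices. Concretely I would pin down the interval-versus-gap structure of a single system $\curly{A}^q_k$, confirm that within one shift the gaps are separated by the interval length $\frac{\gamma^2-1}{\gamma^{k+2}} > \frac{1}{\gamma^{k+2}}$ so that at most one gap can contain a given point, and then check that the $2n+1$ gap-patterns across different $q$ tile $[0,1]$ so that every point misses exactly one of them. Granting the already-stated one-dimensional covering count, the lemma follows immediately from the product structure together with the pigeonhole count $(2n+1) - n = n+1$, justifying the paper's remark that it is ``easy to verify.''
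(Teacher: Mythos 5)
The paper itself offers no proof of this lemma---it is dismissed with ``the following lemma is easy to verify''---so there is nothing to compare against directly; judging your argument on its own terms, the core of it is right and is surely what the authors intended. The reduction through the product structure is correct: $x$ lies in some cube of $\curly{S}^q_k$ if and only if $x_p$ lies in some interval of $\curly{A}^q_k$ for every $p$; by the stated one-dimensional covering property each coordinate $x_p$ is missed by exactly one shift $q_p^{\ast}$; hence the set of $q$ that fail to cover $x$ is $\{q_1^{\ast},\dots,q_n^{\ast}\}$, which has at most $n$ elements, leaving at least $(2n+1)-n = n+1$ values of $q$ with $x \in \curly{S}^q_k$. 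That is all the lemma needs, and all that is used downstream (in the appendix the count $\lvert\curly{Q}_1\rvert = n+1$ should likewise be read as ``at least,'' exactly as in Kolmogorov's Lemma~\ref{lemma:cubes}).

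The genuine flaw is your final step. The indices $q_1^{\ast},\dots,q_n^{\ast}$ are \emph{not} pairwise distinct in general, and the argument you give for their distinctness is a non sequitur: the spacing of the offsets $q\epsilon$ relative to the gap width $\frac{1}{\gamma^{k+2}}$ guarantees that a \emph{single} real number lies in the gap of at most one shifted system, i.e.\ that each $q_p^{\ast}$ is well defined; it imposes no relation between $q_p^{\ast}$ and $q_{p'}^{\ast}$ for $p \ne p'$, since these are determined by different, completely unconstrained coordinates of $x$. Any diagonal point $x = (x_1, x_1, \dots, x_1)$ is a counterexample: all the $q_p^{\ast}$ coincide, the bad set is a singleton, and $x$ is covered by $2n$ of the families $\curly{S}^q_k$ rather than $n+1$. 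So ``exactly $n+1$'' is false as a general claim; drop the distinctness assertion and state the conclusion as ``at least $n+1$,'' which your pigeonhole count already delivers.
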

\begin{center}
\begin{figure}[tb]
  \resizebox{\textwidth}{!}{
\begin{tikzpicture}
\draw [help lines] (0, 0) -- (10, 0) ;
\draw [ultra thick] (0,0) -- (0.9,0) ;
\draw [ultra thick] (1,0) -- (1.9,0) ;
\draw [ultra thick] (2,0) -- (2.9,0) ;
\draw [ultra thick] (3,0) -- (3.9,0) ;
\draw [ultra thick] (4,0) -- (4.9,0) ;
\draw [ultra thick] (5,0) -- (5.9,0) ;
\draw [ultra thick] (6,0) -- (6.9,0) ;
\draw [ultra thick] (7,0) -- (7.9,0) ;
\draw [ultra thick] (8,0) -- (8.9,0) ;
\draw [ultra thick] (9,0) -- (9.9,0) ;
\draw (0,0) -- (0.9,0.0) -- (1,0.5) -- (1.9,0.5) -- (2,1) -- (2.9,1) -- (3,1.5) -- (3.9,1.5) -- (4,2) -- (4.9,2) -- (5,2.5) -- (5.9,2.5) -- (6,3) -- (6.9,3) -- (7,3.5) -- (7.9,3.5) -- (8,4) -- (8.9,4) -- (9,4.5) -- (9.9,4.5) -- (10,5) ;
\draw [dashed] (0,0) -- (10,5) ;
\node [below] at (5,-0.1) {$\curly{A}^q_k$} ;
\node [below right] at (10,5) {$\psi^{p,q}_k$} ;
\node [left]  at (9.9, 5) {$\psi^{p,q}$} ;
\node [below] at (0,0) {0} ;
\node [below] at (10,0) {1} ;
\end{tikzpicture}
}
\caption{Construction of intervals $A^q[d_k]$ for $n=2$ and $\gamma = 10$.}
\label{fig:uniform}
\end{figure}
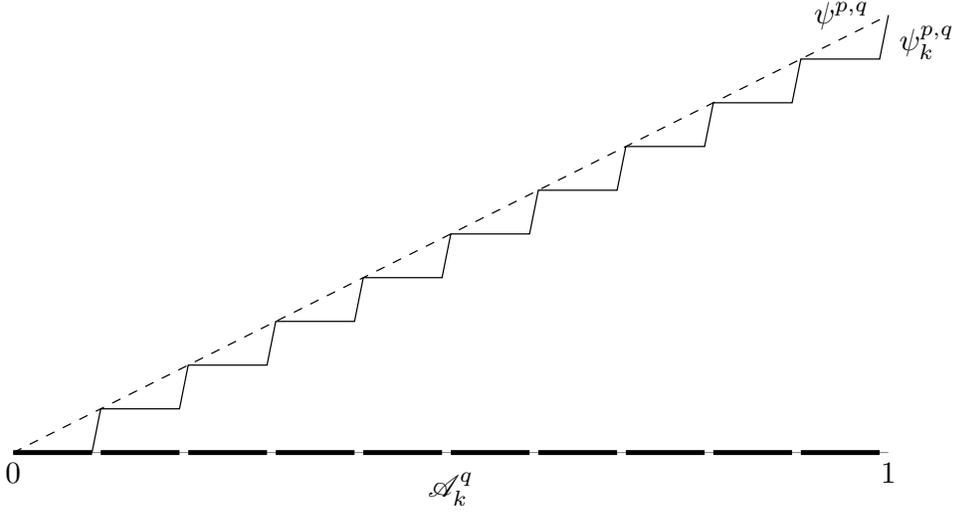
\end{center}
For each $q = 0,\dots,2n$, at each level of refinement $k \in \N$ define $\psi^{p,q}_k : \I \rightarrow \R$ by setting $\psi^{p,q}_k(A^q[d_k]) = \alpha_p (d_k + q \epsilon)$ for each $A^q[d_k] \in \curly{A}^q_k$, and interpolating linearly on the gaps between successive intervals. In Fig.~\ref{fig:uniform}, we see such a function drawn for the set of intervals shown.
Similarly define $\psi^{p,q}: \I \rightarrow \R$ as $$\psi^{p,q}(x) = \alpha_p (x + q \epsilon);$$
note that $\lim_{k \rightarrow \infty} \psi^{p,q}_k = \psi^{p,q}$ uniformly.
Define for each $q \in \{0,\dots,2n\}$ the function $\Psi: \I^n \rightarrow \R$ as $$\Psi^q(x_1,\dots,x_n) = \sum_{p=1}^n \psi^{p,q}(x_p).$$

Define $\lambda^{p,q}_{k,i} = \psi^{p,q}\left(\frac{i}{\gamma^k} \right)$ for $i = 0,\dots, \gamma^k$. Denote $\widehat{\alpha} = \max_{i=1,\dots,p} \alpha_p$ and define $\epsilon_k = \frac{\widehat{\alpha}}{\gamma^k}.$
  \begin{lemma}
    Choose the constants $\lambda_{k,i}^{p,q}$ and $\epsilon_k$ so that the following conditions hold:
    \begin{enumerate}
    \item $\lambda_{k,i}^{p,q} < \lambda_{k,i+1}^{p,q} \le \lambda_{k,i}^{p,q} + O\left(\frac{1}{2^k}\right)$.
    \item $\lambda_{k,i}^{p,q} \le \lambda_{k+1,i'}^{p,q} \le \lambda_{k,i}^{p,q} + \epsilon_k - \epsilon_{k+1}$ if the closed intervals $A^q_{k,i}$ and $A^q_{k+1,i'}$
    do not intersect and $A^q_{k+1,i'}$ falls into the gap between $A^q_{k,i}$ and $A^q_{k,i+1}$.
    \item For fixed $k, q$,  for any $(i_1,\dots,i_n) \ne (j_1,\dots,j_n) \in \{0,\dots,\gamma^k\}^n$, we have
    $$\Psi^q\left( \frac{i_1}{\gamma^k},\dots,\frac{i_n}{\gamma^k}\right) \ne \Psi^q\left( \frac{j_1}{\gamma^k},\dots,\frac{j_n}{\gamma^k}\right).$$
\end{enumerate}
  \label{lemma:lambdasBadFxn}
  \end{lemma}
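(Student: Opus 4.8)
Because the constants are already fixed explicitly — by definition $\lambda^{p,q}_{k,i} = \psi^{p,q}(i/\gamma^k) = \alpha_p\bigl(\tfrac{i}{\gamma^k}+q\epsilon\bigr)$ and $\epsilon_k = \widehat{\alpha}/\gamma^k$ — the plan is to read the lemma as a verification: substitute these formulas into each of the three conditions and check them in turn, leaving the only genuinely nontrivial one for last.

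For condition 1 I would simply subtract: $\lambda^{p,q}_{k,i+1}-\lambda^{p,q}_{k,i} = \alpha_p/\gamma^k$. Strict monotonicity is immediate since every $\alpha_p\ge\alpha_1=1>0$, and the upper bound follows from $\alpha_p\le\widehat{\alpha}=2^{(n-1)/n}<2$ together with $\gamma\ge 2n+2\ge 2$, giving $\alpha_p/\gamma^k < 2/\gamma^k \le 2/2^k = O(1/2^k)$. For condition 2 the work is to turn the geometric hypothesis into a statement about indices. Writing $i/\gamma^k = i\gamma/\gamma^{k+1}$, the refined grid points associated with the cell of the coarse index $i$ are $i'/\gamma^{k+1}$ with $i\gamma \le i' \le (i+1)\gamma - 1$; requiring that $A^q_{k+1,i'}$ sit in the gap after $A^q_{k,i}$ pins $i'$ into this range. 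Then the difference telescopes to $\lambda^{p,q}_{k+1,i'}-\lambda^{p,q}_{k,i} = \alpha_p(i'-i\gamma)/\gamma^{k+1}$, which is nonnegative (lower bound) and, using $\alpha_p\le\widehat{\alpha}$ and $i'-i\gamma\le\gamma-1$, is at most $\widehat{\alpha}(\gamma-1)/\gamma^{k+1} = \epsilon_k-\epsilon_{k+1}$ (upper bound). This is exactly the claimed sandwich, and it is pure arithmetic once the index range is identified.

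The crux is condition 3. Evaluating $\Psi^q(i_1/\gamma^k,\dots,i_n/\gamma^k)=\tfrac{1}{\gamma^k}\sum_{p=1}^n\alpha_p i_p + q\epsilon\sum_{p=1}^n\alpha_p$, the $q$-dependent term is the same for both tuples, so the desired inequality of values is equivalent to $\sum_{p=1}^n \alpha_p(i_p-j_p)\ne 0$ whenever $(i_1,\dots,i_n)\ne(j_1,\dots,j_n)$. Since $\alpha_p = 2^{(p-1)/n}=\lambda^{p-1}$ with $\lambda=2^{1/n}$ and the coefficients $i_p-j_p$ are integers, this reduces precisely to the integral (indeed rational) independence of $1,\lambda,\dots,\lambda^{n-1}$. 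I would obtain this from the irreducibility of $x^n-2$ over $\Q$, which holds by Eisenstein's criterion at the prime $2$: thus $[\Q(\lambda):\Q]=n$, the set $\{1,\lambda,\dots,\lambda^{n-1}\}$ is a $\Q$-basis, and no nontrivial integer combination can vanish.

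This number-theoretic step is where I expect the real content to sit; conditions 1 and 2 are routine. It is the concrete realization of the integral-independence hypothesis of Theorem~\ref{theorem:KST-Sprecher} under the choice $\lambda=2^{1/n}$. I would also note that rational independence yields distinctness for \emph{every} pair of integer tuples, so the restriction to the finite range $\{0,\dots,\gamma^k\}^n$ plays no role in the bare inequality — it would matter only if one wanted a uniform lower bound on the separation, which is precisely where this candidate ultimately runs into trouble in the limit.
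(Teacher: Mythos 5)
Your proof follows the paper's argument essentially verbatim: the same subtraction for condition~1, the same index identification $i' = \gamma i + j$ with $0 \le j \le \gamma-1$ and telescoping bound $\alpha_p j/\gamma^{k+1} \le \widehat{\alpha}(\gamma-1)/\gamma^{k+1} = \epsilon_k - \epsilon_{k+1}$ for condition~2, and the same reduction of condition~3 to the rational independence of $1, 2^{1/n}, \dots, 2^{(n-1)/n}$ after cancelling the $q\epsilon$ terms. The only difference is that you actually justify that independence via Eisenstein's criterion applied to $x^n - 2$, a fact the paper merely asserts, so your version is, if anything, slightly more complete.
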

\begin{proof}
\begin{enumerate}
  \item For each $k \in \N$ and $i = 0,\dots,\gamma^k$, we have
  \begin{equation*} \begin{split}
\lambda^{p,q}_{k,i} &= \alpha_p \left( \frac{i}{\gamma^k} + q \epsilon \right) \\
\lambda^{p,q}_{k,i+1}&= \alpha_p \left( \frac{i+1}{\gamma^k} + q \epsilon \right)\\
\lambda^{p,q}_{k,i+1} - \lambda^{p,q}_{k,i} &= \frac{\alpha_p}{\gamma^k} \in O(2^{-k}) \text{ since } \gamma > 2.
  \end{split} \end{equation*}
  \item For each $k \in \N$, $i = 0,\dots,\gamma^k$, and $j=1,\dots,\gamma-1$,
  \begin{equation*} \begin{split}
\lambda^{p,q}_{k,i} &= \alpha_p \left( \frac{i}{\gamma^k} + q \epsilon \right) \\
\lambda^{p,q}_{k+1, \gamma i + j} &= \alpha_p \left( \frac{\gamma i + j}{\gamma^{k+1}} + q \epsilon \right) \\
\lambda^{p,q}_{k+1, \gamma i + j} - \lambda^{p,q}_{k,i} &= \alpha_p \left( \frac{j}{\gamma^{k+1}}\right) \\
 &\le \widehat{\alpha} \left( \frac{j}{\gamma^{k+1}} \right) \\
&\le \widehat{\alpha}\left( \frac{\gamma - 1}{\gamma^{k+1}} \right) \\
&= \widehat{\alpha} \left( \frac{1}{\gamma^k} - \frac{1}{\gamma^{k+1}}\right) \\
&= \epsilon_{k} - \epsilon_{k+1}.
  \end{split} \end{equation*}
  \item Note that the numbers $\alpha_1,\dots,\alpha_n$ are rationally independent, i.e. $\forall x = (x_1,\dots,x_n)\in \Q^n$, if $x \ne 0$, then $$\sum_{p=1}^n \alpha_p x_p \ne 0.$$
Fix $k$ and $q$; choose any two $(i_1,\dots,i_n) \ne (j_1,\dots,j_n) \in \{0,\dots,\gamma^k\}^n$. Then,
\begin{equation*} \begin{split}
\Psi^q\left( \frac{i_1}{\gamma^k},\dots,\frac{i_n}{\gamma^k}\right) - \Psi^q\left( \frac{j_1}{\gamma^k},\dots,\frac{j_n}{\gamma^k}\right) &=
 \sum_{p=1}^n \psi^{p,q}\left( \frac{i_p}{\gamma^k} \right) - \sum_{p=1}^n \psi^{p,q}\left( \frac{j_p}{\gamma^k}\right) \\
 &= \sum_{p=1}^n \alpha_p \left( \frac{i_p}{\gamma^k} - \frac{j_p}{\gamma^k} \right) \\
 &\ne 0,
\end{split} \end{equation*} since $\frac{i_p - j_p}{\gamma^k}$ is rational for all $p = 1,\dots,n$ and not zero.
\end{enumerate}
\end{proof}

\subsection{Why this function is misleading}

We see that lemma \ref{lemma:cubesBadFxn} is sufficient for lemma \ref{lemma:cubes}. Lemma \ref{lemma:lambdasBadFxn} is nearly identical as lemma \ref{lemma:lambdas}. We would expect then that our functions $\psi^{pq}(x) = \alpha_p(x+q\epsilon)$ are valid inner functions for theorem \ref{theorem:KST}. \\

However, this is not the case. We will illustrate this for $n=2,$ but the flaw extends to higher dimensions. Let $\gamma = 10$. Choose $\alpha_1 = 1,\,\alpha_2 = \sqrt{2}.$ Fix $q \in \{0,\dots,4\}$; we arbitrarily choose $q = 0$. Let $$\Psi(x) = \Psi^0(x) = \sum_{p=1}^2 \psi^{p0}(x_p + 0 \epsilon) = x_1+\sqrt{2}x_2.$$
The points $x^{(1)} = \left(0, \frac{1}{2\sqrt{2}}\right)$ and $x^{(2)} = \left(\frac{1}{2}, 0\right)$ fall in different boxes $S^0_{(1)},\,S^0_{(2)} \in \curly{S}_1^q$, but $$\Psi(x^{(1)}) = \Psi(x^{(2)}) = \frac{1}{2}.$$

For every level of refinement $j \in \N$, the function $\Psi_j^q = \sum_{p=1}^n \psi_j^{pq}$ satisfies the separation lemma \ref{lemma:minsep}, but in the limit, we lose separation: in essence, we only separate function values defined on points in our dyadic expansion $\curly{D}_j$.

More pressing is the result by Vitushkin in \cite{vitushkin}, that such an inner function cannot be continuously differentiable if we aim to represent smooth multivariate functions as superpositions of univariate functions. Vitushkin and Henkin give a stronger result in \cite{vitushkin-henkin}, highlighted by Lorentz in \cite{lorentz}.
\begin{theorem}
Let $D \subset \R^n$ be a compact connected domain with non-empty interior. Fix $m \in \N$ and functions $p_i,\,q_i \in C(D)$, where $i \in \{1,\dots,m\}$, with for each $i$, the function $q_i$ is continuously differentiable. Let $$F = \left\{ \sum_{i=1}^m \, p_i \,(g_i \circ q_i) \,\,:\,\, g_i \in C(\R) \right\}.$$ Then, $F$ is nowhere dense in $C(D),$ and is a set of first category in $C(D)$.
In particular, there is even a polynomial that is not contained in $F$.
\end{theorem}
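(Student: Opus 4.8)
The plan is to exploit the fact that $F$ is a \emph{linear} subspace of $C(D)$: it is the image of the bounded linear map $T:\prod_{i=1}^m C\big(q_i(D)\big)\to C(D)$, $T(g_1,\dots,g_m)=\sum_{i=1}^m p_i\,(g_i\circ q_i)$, since $g_i\mapsto p_i\,(g_i\circ q_i)$ is linear and only the restriction of $g_i$ to the compact range $q_i(D)$ matters. A linear subspace of a normed space is either dense or nowhere dense: if $\overline F\neq C(D)$ then $\overline F$ is a proper closed subspace, which contains no ball and is therefore nowhere dense. Moreover every nowhere dense set is of first category. Hence both assertions, together with the existence of a polynomial outside $F$, follow at once from the single claim that $F$ is \emph{not dense} in $C(D)$. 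This reduction is the first step, and it lets me dispense with category arguments entirely.

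To prove non-density I would pass to the dual. By the Riesz representation theorem and Hahn--Banach, $\overline F\neq C(D)$ is equivalent to the existence of a nonzero finite signed Borel measure $\mu$ on $D$ that annihilates $F$. Unwinding the definition, $\int p_i\,(g_i\circ q_i)\,d\mu=0$ for every $g_i\in C(\R)$ and every $i$ is equivalent, via the change-of-variables characterization of the pushforward, to the vanishing of each weighted pushforward $(q_i)_{*}(p_i\,\mu)=0$ for $i=1,\dots,m$. A target polynomial is then separated from $F$ because, once such a nonzero $\mu$ exists, Stone--Weierstrass furnishes a polynomial $P$ with $\int P\,d\mu\neq0$, whence $\mathrm{dist}(P,F)\ge|\int P\,d\mu|/\|\mu\|>0$. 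So the whole theorem reduces to constructing one nonzero measure whose pushforward under each coordinate $q_i$, weighted by $p_i$, vanishes.

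The heart of the argument, and the only place the hypotheses truly enter, is this construction; it is essential that the $q_i$ are $C^1$ and that $n\ge2$, since Theorem~\ref{theorem:KST} shows that with merely continuous inner functions and $m=2n+1$ the space $F$ equals all of $C(D)$, so no such $\mu$ can exist. Working in a small ball inside $D$, the model case is when the $q_i$ are affine with gradients $a_i$ and the $p_i$ are constant: choosing for each $i$ a direction $v_i\perp a_i$ (possible since $n\ge2$), the iterated finite-difference stencil $\Delta^{\epsilon}_{v_1}\cdots\Delta^{\epsilon}_{v_m}$ is a finitely supported signed measure that annihilates each ridge function $g_i\circ q_i$ \emph{exactly} --- the factor $\Delta^{\epsilon}_{v_i}$ differences $g_i\circ q_i$ across two points of identical $q_i$-value --- while it fails to annihilate a generic monomial. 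I would then transport this model to the general case, using the $C^1$ hypothesis to make the level sets of each $q_i$ nearly affine at scale $\epsilon$, so that the leading cancellation survives with a controlled remainder.

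I expect the main obstacle to be precisely this remainder control, for two reasons that genuinely use the regularity. First, the weights $p_i$ are only continuous and not constant, so a single difference $\Delta^{\epsilon}_{v_i}$ leaves an error of order $\omega_{p_i}(\epsilon)$; second, the curvature of the level sets makes $q_i$ only approximately constant along $v_i$, and since $g_i$ is merely continuous this error must be estimated through the modulus of continuity of $g_i$ and made uniform over all admissible $g_i$ and over all $m$ directions at once. Handling the regime $m\ge n$, where no single transverse direction works and the stencils in different coordinates interact, is what forces a quantitative estimate; this is the content of Vitushkin's multidimensional variation bound (and Henkin's refinement), and rescaling and normalizing the stencil to extract a nonzero limiting measure $\mu$ is the step I would expect to demand the most care.
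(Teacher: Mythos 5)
First, a point of comparison: the paper does not prove this theorem at all --- it is quoted from Vitushkin and Henkin (via Lorentz) as background explaining why a continuously differentiable inner function cannot yield a Kolmogorov representation --- so there is no in-paper argument to measure yours against. On its own terms, your reductions are sound and standard: $F$ is the range of a linear map, so a proper closure forces nowhere density and hence first category; Hahn--Banach together with Riesz--Markov converts non-density into the existence of a nonzero measure $\mu$ with $(q_i)_*(p_i\,\mu)=0$ for each $i$; and Stone--Weierstrass then produces a polynomial outside $\overline{F}$. You are also right to flag that $n\ge 2$ is an implicit hypothesis (for $n=1$ and $q_1=\mathrm{id}$ the set $F$ is all of $C(D)$).

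The gap is in the one step that carries all the weight: constructing the annihilating measure. Your finite-difference stencil kills $g_i\circ q_i$ \emph{exactly} only when $q_i$ is constant along the segment in direction $v_i$, i.e.\ essentially only in the affine model case. For general $C^1$ data one has $q_i(y+\epsilon v_i)-q_i(y)=o(\epsilon)$ but not $0$, so the $i$-th difference leaves a residue controlled only by $\omega_{g_i}(o(\epsilon))$. Since $g_i$ ranges over \emph{all} of $C(\R)$ with no bound on norm or modulus of continuity, approximate annihilation is worthless: if $(q_i)_*(p_i\,\mu)\ne 0$ exactly, some admissible $g_i$ makes $\int p_i\,(g_i\circ q_i)\,d\mu$ arbitrarily large, and $\mu$ fails to annihilate $F$ at all. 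The ``remainder control'' you defer to Vitushkin's variation bounds is therefore not a technical cleanup but the entire content of the theorem; the actual Vitushkin--Henkin/Lorentz argument does not use point-mass stencils, but instead builds $\mu$ so that every weighted pushforward vanishes \emph{identically} (e.g.\ by distributing mass along pieces of the level hypersurfaces via the implicit function theorem, or by a Baire-category bounded-lifting argument combined with an entropy or variation obstruction). As written, your proof establishes the conclusion only for affine $q_i$ with constant weights $p_i$.
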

 Even though at each step, the functions $\psi^{pq}_j$ are not continuously differentiable, their limit $\psi^{pq}$ is continuously differentiable. Setting $m = 2n+1$, $p_i \equiv 1$ for $i \in {1,\dots,m}$, and choosing the functions $g_i = \Psi^i = \sum_{p=1}^n \psi^{pi}$ our constructed inner functions confirms that a linear inner function does not allow for Kolmogorov Representation.

\section{Construction of our Lipschitz Inner Function}

We now turn to the construction of our own Lipschitz continuous inner function that meet the conditions for \ref{theorem:KST-Sprecher}. We construct functions $\Psi^q(x_1,\dots,x_n) = \sum_{p=1}^n \lambda^p \psi(x_p + q \epsilon)$ that embed $[0,1]^n$ into $\R^{2n+1}$. We borrow notation from Sprecher \cite{sprecher1972}; as such, we restate lemmas \ref{lemma:cubes} and \ref{lemma:minsep}.

\begin{lemma}\label{lemma:cubesLipschitz}
Fix $j \in \N$ and $\epsilon \in (0,1/2n]$, and let $\curly{T}_j$ be a set of closed intervals in $[-1,1]$. Define $\curly{T}^q_j$ as a set of closed intervals in $[-1+q \epsilon, 1 + q \epsilon]$ for $q \in \{0.\dots,2n\}$ such that $$\curly{T}^q_j = \{t + q \epsilon \,\,:\,\,t \in \curly{T}_j\}.$$
 Enumerate the intervals in $\curly{T}^q_j$ so that $$\curly{T}^q_j = \{ t^q_i \,\,:\,\, i=1,\dots,m_j \} \text{ with }m_j \in \N.$$
 Define $$S^q_{j; i_1,\dots,i_n} = \prod_{p=1}^n t^q_{i_p} \text{ where }t^q_{i_p} \in \curly{T}^q_j \text{ for }p = 1,\dots,n,$$ and let
 $$\curly{S}^q_j = \{ S^q_{j; i_1,\dots,i_n} \,\,:\,\, 1 \le i_1,\dots,i_n \le m_j \}.$$  Suppose the families of cubes $\curly{S}^q_j$ satisfy the following:
\begin{enumerate}
\item {\normalfont Boxes in a town are disjoint on each level}\\
  For any $q$, if $(i_1,\dots,i_n) \ne (i'_1,\dots,i'_n)$, then $$S^q_{j; i_1,\dots,i_n} \cap S^q_{j,i'_1,\dots,i'_n} = \emptyset.$$
\item {\normalfont Each point intersects $n+1$ towns on a level}\\
  $\forall x \in [0,1]^n,\,\exists q_1,\dots,q_{n+1}$ such that $x \in S^{q_k}_{j; i^k_1,\dots,i^k_n}$ for some valid indices $(i^k_1,\dots,i^k_n)$ for each $k \in 1,\dots,n+1$.
\item {\normalfont Boxes get smaller uniformly with increasing level}\\
  $\text{Diam}[S^q_{j; i_1,\dots,i_n}] \rightarrow 0$ uniformly as $j \rightarrow \infty$ in $(i_1,\dots,i_n)$ for each valid set of indices and for every $q$.
\end{enumerate}
Let $\Psi^q(x_1,\dots,x_n) = \sum_{p=1}^n \lambda_p \psi(x_p + q \epsilon)$, where $\lambda_1,\dots,\lambda_n$ are integrally independent. Suppose that each of the families of cubes $\curly{S}^q_j$ additionally satisfies:
\begin{enumerate}
  \setcounter{enumi}{3}
  \item $\Psi^q( S^q_{j; i_1,\dots,i_n}) \cap \Psi^q(S^q_{j,i'_1,\dots,i'_n}) = \emptyset$ when $(i_1,\dots,i_n) \ne (i'_1,\dots,i'_n)$.
\end{enumerate}
Then, any function $f \in C([0,1]^n)$ admits a KST representation $$f = \sum_{q=0}^{2n} \chi^q \circ \Psi^q .$$
\end{lemma}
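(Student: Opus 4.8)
The plan is to follow Kolmogorov's successive-approximation scheme, using conditions (1)--(4) to carry out one stage of approximation and then summing a geometric series of outer functions. After rescaling, fix $f\in C([0,1]^n)$ with $\norm{f}_\infty = 1$. I will produce continuous $\chi^q$ with $f = \sum_{q=0}^{2n}\chi^q\circ\Psi^q$ as a uniformly convergent series $\chi^q = \sum_{m\ge 0}\chi^q_{(m)}$, where the partial sums reduce the uniform norm of the residual $g_m = f - \sum_q\big(\sum_{\ell<m}\chi^q_{(\ell)}\big)\circ\Psi^q$ by a fixed factor $\theta\in(0,1)$ depending only on $n$. Conditions (1)--(4) are precisely what is needed to perform each stage.

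The heart of the argument is the single-stage reduction: given a residual $g\in C([0,1]^n)$, I construct continuous $\chi^q$ with $\norm{\chi^q}_\infty\le\tfrac1{n+1}\norm{g}_\infty$ and $\norm{g - \sum_q\chi^q\circ\Psi^q}_\infty\le\theta\norm{g}_\infty$. First, by uniform continuity of $g$ together with condition (3), choose a level $j$ so large that the oscillation of $g$ on every box $S^q_{j;i_1,\dots,i_n}$ is smaller than a prescribed $\delta$. For each $q$ and each box $S$ in $\curly{S}^q_j$ the image $\Psi^q(S)$ is an interval $\Delta^q_S$, and condition (4) guarantees that the intervals $\{\Delta^q_S\}_S$ are pairwise disjoint. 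I may therefore \emph{unambiguously} set $\chi^q$ equal to $\tfrac1{n+1}g(\xi^q_S)$ on $\Delta^q_S$ for a sample point $\xi^q_S\in S$, and extend it across the gaps between consecutive images so that it tapers back to $0$; this keeps $\chi^q$ continuous and bounded by $\tfrac1{n+1}\norm{g}_\infty$. The gaps of the original interval system, inherited by the images, are exactly what make this continuous extension possible.

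For the estimate, fix $x\in[0,1]^n$ and let $r(x)$ be the number of towns $q$ for which $x$ lies in a box $S^q$ of $\curly{S}^q_j$ (unique, by condition (1)). Condition (2) gives $r(x)\ge n+1$, and trivially $r(x)\le 2n+1$. Each covering town contributes $\tfrac1{n+1}g(\xi^q_{S^q})$, within $\tfrac{\delta}{n+1}$ of $\tfrac1{n+1}g(x)$ since $\xi^q_{S^q}$ and $x$ share the same small box; each non-covering town contributes $0$, because $x$ then lies in a gap and $\Psi^q(x)$ falls between the images $\Delta^q_S$, where $\chi^q$ vanishes. Summing,
\begin{equation*}
  \Big| g(x) - \sum_{q=0}^{2n}\chi^q(\Psi^q(x)) \Big|
  \;=\; \Big| \Big(1 - \tfrac{r(x)}{n+1}\Big) g(x)\Big| + O(\delta)
  \;\le\; \tfrac{n}{n+1}\norm{g}_\infty + O(\delta),
\end{equation*}
using $n+1\le r(x)\le 2n+1$. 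Choosing $\delta$ small enough makes the right-hand side at most $\theta\norm{g}_\infty$ for some $\theta<1$ depending only on $n$, which proves the single-stage reduction.

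Iterating the reduction on $g_0=f,\,g_1,\,g_2,\dots$ gives $\norm{g_m}_\infty\le\theta^m$ and $\norm{\chi^q_{(m)}}_\infty\le\tfrac1{n+1}\theta^m$, so each series $\chi^q=\sum_m\chi^q_{(m)}$ converges uniformly to a continuous function on $\R$ and $\sum_q\chi^q\circ\Psi^q = \lim_m(f-g_m) = f$. I expect the main obstacle to be the single-stage construction rather than the summation: specifically, making each $\chi^q$ genuinely continuous while guaranteeing that towns \emph{not} covering a point contribute negligibly. This is exactly where condition (4) and the behaviour of $\psi$ on the gaps are indispensable — one must verify that $\Psi^q$ sends gap points into the gaps \emph{between} the images $\Delta^q_S$, rather than accidentally into some $\Delta^q_{S'}$, so that setting $\chi^q=0$ there is consistent, and one must control the tapering of $\chi^q$ near the endpoints of each $\Delta^q_S$ so that a covering town is not inadvertently reduced toward $0$ when $x$ is near a box boundary. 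The disjointness-with-margin supplied by condition (4) (cf.\ Lemma~\ref{lemma:lambdas}) is what secures the former, and the surplus of covering towns ($r(x)\ge n+1$) absorbs the latter.
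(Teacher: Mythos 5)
Your proposal follows the same route as the paper's proof (carried out in the appendix): Kolmogorov's successive-approximation scheme, in which conditions (1)--(4) enable a one-stage reduction of the residual by a fixed factor $\theta<1$, and the outer functions are obtained as uniformly convergent telescoping sums. The one place your write-up wobbles is the treatment of the towns that do \emph{not} cover $x$: you assert they contribute exactly $0$, write the error as an \emph{equality} involving $\bigl(1-\tfrac{r(x)}{n+1}\bigr)g(x)$, and then flag as the ``main obstacle'' the need to verify that $\Psi^q$ sends gap points outside every image $\Delta^q_{S'}$. That verification is neither available in general (your $\chi^q$ is mid-taper, not zero, on the gaps between images, and nothing in (1)--(4) prevents $\Psi^q(x)$ from landing in $\Delta^q_{S'}$ for a box $S'$ not containing $x$) nor necessary. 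Since your construction already gives $\lvert\chi^q\rvert\le\tfrac{1}{n+1}\norm{g}_\infty$ everywhere, a non-covering town contributes at most $\tfrac{1}{n+1}\norm{g}_\infty$ in absolute value, and the arithmetic still closes: with $r(x)\ge n+1$ covering towns contributing $\tfrac{1}{n+1}g(x)+O(\delta)$ each and at most $2n+1-r(x)$ remaining towns, the residual is bounded by $\tfrac{r(x)-n-1}{n+1}\norm{g}_\infty+\tfrac{2n+1-r(x)}{n+1}\norm{g}_\infty+O(\delta)=\tfrac{n}{n+1}\norm{g}_\infty+O(\delta)$. This is precisely how the paper sidesteps the issue: it extends $\chi^q_r$ off the images by the Tietze extension theorem subject only to the norm bound $\norm{\chi^q_r-\chi^q_{r-1}}_\infty\le\tfrac{1}{n+1}M_{r-1}$, and applies that bound to the $n$ non-covering towns to get the contraction factor $\tfrac{2n+1}{2n+2}$. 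With that one-line repair your argument is complete and coincides with the paper's.
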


Following the proofs in \cite{kolmogorov}, \cite{fridman}, and \cite{sprecher1972}, we construct $2n+1$ sets of intervals $\curly{T}^q \subseteq [-1,2],\, q \in \{0,\dots,2n\},$ such that every point in $[0,1]$ is contained in at least $2n$
of the $2n+1$ sets of intervals, or towns. Define $\curly{T}^q$ iteratively as $\curly{T}^q_j$ as $j \rightarrow \infty$, where $\curly{T}^q_j$ is a translation of $\curly{T}_j$ by $q \epsilon$, where $\epsilon \in \left( 0, \frac{1}{2n}\right]$:
\begin{equation*} \begin{split}
\curly{T}^q &= \left\{ t + q \epsilon \,\,:\,\, t \in \curly{T} \right\} \\
\curly{T}^q_j &= \left\{ t + q \epsilon \,\,:\,\, t \in \curly{T}_j \right\}. \\
\end{split} \end{equation*}
It is sufficient to construct a system of towns that satisfies the following:
\begin{lemma}  \label{lemma:criterion} For each $j \in \N$, the system of towns $\curly{T}_j$ and the function $\psi_j: [-1,1]\rightarrow \R \in C[-1,1]$ satisfy the following:
\begin{enumerate}
  \item {\normalfont Intervals get smaller uniformly with increasing level}\\
  $\sup_{t \in \curly{T}_j}\text{Diam}(t) \rightarrow 0$ uniformly as $j \rightarrow 0$.
  \item {\normalfont Each point intersects $2n$ towns on a level}\\
  For each point $x \in [0,1]$, $\exists q_1,\dots,q_{2n} \in \{0,\dots,2n\}$ such that there is some $t^{q_k} \in \curly{T}^{q_k}_j$ such that $x \in t^{q_k}$, where $k =1,\dots,2n$.
  \item {\normalfont Images of disjoint intervals are disjoint}\\
  $\forall t_1 \ne t_2 \in \curly{T}_j$, $\psi(t_1) \cap \psi(t_2) = \emptyset.$
  \item {\normalfont Functions $\psi_j$ are Lipschitz}\\
  The maximum slope of $\psi_j$ is $\widehat{m}_j = 1 - 2^{-j}$.
\end{enumerate}
\end{lemma}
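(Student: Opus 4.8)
The plan is to construct the towns $\curly{T}_j$ and the functions $\psi_j$ simultaneously by induction on the level $j$, take $\psi=\lim_{j\to\infty}\psi_j$, and then verify the four properties. I would build $\curly{T}_j$ from a base-$\gamma$ digit structure as in the misleading construction of Section 3: at level $j$ the intervals have length of order $\gamma^{-(j+\mathrm{const})}$ separated by gaps of comparable order, and the $2n+1$ shifted families $\curly{T}^q_j$ are the translates of $\curly{T}_j$ by $q\epsilon$ with $\epsilon\in(0,1/2n]$. Properties 1 and 2 are then the easy combinatorial facts inherited from this interval construction: the geometric shrinking of the interval lengths gives property 1 (uniform decay of the diameters as the level grows), and the translation-by-$q\epsilon$ covering argument already verified for the analogous families in Lemma \ref{lemma:cubesBadFxn} gives property 2, since at most one of the $2n+1$ shifts can place its gap over a given point.

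The substance of the lemma lies in building $\psi_j$ so that properties 3 and 4 hold and survive passage to the limit. I would take each $\psi_j$ piecewise linear: assign to every interval $t\in\curly{T}_j$ an image slot and interpolate across the gaps, where the slots are drawn from a digit-separated set so that distinct intervals at the same level receive image intervals with a definite gap between them. This is what yields property 3 for $\psi=\lim\psi_j$, and, crucially, because separation is enforced on whole image intervals rather than only at the sample points $i/\gamma^k$, it is \emph{not} destroyed in the limit --- precisely the defect of the linear candidate exhibited in Section 3. Integral independence of $\lambda_1,\dots,\lambda_n$ then upgrades this one-dimensional separation to the box separation of condition 4 in Lemma \ref{lemma:cubesLipschitz}, provided the slot widths are kept small relative to the slot gaps, so that Lemma \ref{lemma:criterion} indeed suffices for the KST representation.

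The hard part is property 4: forcing the maximal slope of $\psi_j$ to equal $\widehat m_j = 1-2^{-j}$ while still leaving room between slots for the separation of property 3. The slope on a gap is the ratio of the jump between adjacent slot values to the gap width, so the two demands pull in opposite directions --- separation wants large jumps, the Lipschitz bound wants small ones relative to the gaps. I would resolve this with a budgeted, self-similar assignment: the steep interpolating segments on the gaps newly created at level $j$ realize exactly the slope $1-2^{-j}$, while $\psi_j$ is flat on each interval $t$, the finer structure inside $t$ being supplied by a rescaled copy of the whole pattern at level $j+1$. This flatness serves both requirements at once: it keeps the image slots narrow enough for property 3 and frees the entire slope budget for the gaps. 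The reserved margin $2^{-j}$ shrinks to zero, so the per-level maxima $1-2^{-j}$ increase to a supremum of exactly $1$.

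Finally I would check consistency and convergence: requiring $\psi_{j+1}$ to agree with $\psi_j$ away from the newly refined gaps and controlling the per-level correction by the diameters from property 1 gives uniform convergence $\psi_j\to\psi\in C[-1,1]$. Since $|\psi(x)-\psi(y)|=\lim_j|\psi_j(x)-\psi_j(y)|\le \lim_j(1-2^{-j})|x-y|=|x-y|$, the limit is Lipschitz with constant $1$ --- though, consistent with Vitushkin's theorem, not continuously differentiable, as the steepest slopes sit on ever-finer gaps and never settle. The main obstacle throughout is the balancing act of the third paragraph; properties 1 and 2 are essentially bookkeeping carried over from the interval construction.
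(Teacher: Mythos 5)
Your construction of the towns is where the argument breaks. You take $\curly{T}_j$ to be the uniform base-$\gamma$ grid of Section~3, in which every gap at level $j$ has width $\gamma^{-(j+2)}$ and is swallowed at level $j+1$ by a longer interval, leaving only a residual gap of width $\gamma^{-(j+3)}$ beside the grid point; consequently the level-$J$ gaps occupy only a $\gamma^{-2}$ fraction of every subinterval, at every scale. If each $\psi_J$ is flat on the level-$J$ intervals and has slope at most $1$ on the level-$J$ gaps, then for all $x<y$ one gets $\psi(y)-\psi(x)\le\lim_J \lvert G_J\cap[x,y]\rvert \approx \gamma^{-2}(y-x)$, where $G_J$ is the union of level-$J$ gaps. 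Hence the separation between the images of two adjacent level-$j$ intervals is at most $\gamma^{-2}\cdot\gamma^{-(j+2)}$, while the image of a single level-$j$ interval, whose own sub-gaps carry comparable slopes, has width of order $\gamma^{-2}\cdot\gamma^{-j}$: slot widths exceed slot gaps by a factor of $\gamma^{2}$, the opposite of what your own separation requirement demands. You face a dichotomy: equalize the slopes and $\psi$ degenerates to (essentially) the linear function, which satisfies the one-dimensional property~3 but fails the multidimensional separation the lemma is meant to be sufficient for --- exactly the defect exhibited in Section~3 --- or boost the inter-slot separations, in which case a fixed rise must be squeezed through a window narrowing by a factor of $\gamma$ per level and the slope blows up like $\gamma^{J}$. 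This is precisely why the K\"oppen and Braun--Griebel constructions on this grid are only H\"older continuous. Your ``budgeted, self-similar assignment'' is asserted rather than exhibited, and no assignment on the uniform grid can satisfy properties~3 and~4 simultaneously in the sense the paper needs.

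The paper escapes by abandoning the uniform grid: a gap, once cut, persists with fixed width at every later level (this is what your phrase ``$\psi_{j+1}$ agrees with $\psi_j$ away from the newly refined gaps'' silently requires), so the rise across it is assigned once and never rescaled. The cost is that the $2n+1$ towns cease to be exact translates of one another in their gap structure, and your property~2 --- which you dismiss as bookkeeping inherited from Lemma~\ref{lemma:cubesBadFxn} --- becomes the hard part: a new break point in one town can land in a pre-existing hole of another town and drop the cover below $2n$. The entire ``find holes / solve for plugs'' stage of Algorithm~\ref{alg:Lip} exists to repair this, and the paper's proof of property~2 is an induction using the facts that plugs only add coverage and that the newly cut gaps have pairwise disjoint closures, each contained in an interval of every other town. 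Properties~3 and~4 are then a separate induction showing $\psi_{j+1}$ is monotone, constant on intervals, and has slope at most $1-2^{-j-1}$ on each of three kinds of inter-interval gaps. Your closing limit computation is fine, but it applies only once a construction achieving the per-level bounds exists; the one you sketch does not.
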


Our algorithm proceeds by examining the intervals at a given level, and breaking larger ones in order to enforce the
diameter condition of lemma~\ref{lemma:cubesLipschitz}. The algorithm is robust to a permutation of the order in which it
processes towns. Given a system of towns, we first determine which intervals should be broken. We break these intervals by
removing a \textit{gap} that includes the interval midpoint. Removing these gaps might cause some of these midpoints to
no longer be included in at least $2n$ intervals, so if a midpoint falls in a hole between two intervals in another
town, we insert a \textit{plug} into that hole. We determine the width of these plugs by solving a block-diagonal linear
system. By adding in these plugs, we make sure that every break point is contained in $2n+1$ towns, so that when we
break apart these intervals, each break point is still contained in at least $2n$ towns, satisfying
lemma~\ref{lemma:criterion}. After adding in all necessary plugs, we proceed to break apart each of the intervals that
was above our threshold. If the gap we create is small enough, breaking apart one interval has no effect on other
intervals at the same refinement level.

In addition to satisfying lemmas~\ref{lemma:cubesLipschitz} and~\ref{lemma:criterion}, we desire that the function $\psi$ is
robust to choices made during its computation, such as which towns to process first at a given refinement level. If we
proceed town-by-town, we may create plugs or gaps that shift into other plugs or gaps at the same level of refinement,
changing the function values assigned to other towns at the same level. However, since we solve for all plugs at once on
a given level, our functions do have this robustness.

Algorithm \ref{alg:Lip} outlines our implementation. We begin with $\psi_0 \equiv 0$ and $\curly{T}_0 = \{ [-1,1] \}.$ For each refinement level $j \in \N$, there are three primary stages:
\begin{enumerate}
  \item Find Holes
  \item Solve for Plugs
  \item Create Gaps
\end{enumerate}
We now describe each stage of this implementation in greater detail.

\begin{algorithm}
\caption{Lipschitz Inner Function}\label{alg:Lip}
\begin{algorithmic}
\Procedure{Lip}{$n$}
  \Comment{$n$ spatial dimension} \\
  \While{$j < \infty$}
      \State Get all towns above threshold, to break during iteration $j$:
      \State $\widehat{\curly{T}} \gets \{ I \in \curly{T} \,\,:\,\, \lvert I \rvert \ge \left(1/2\right)^j \}$ \\

      \State Find Holes:
      \State $\curly{H} \gets \emptyset$
      \For{$ t \in \widehat{\curly{T}} $}
          \State Get point to break, and check the number of gaps it falls in:
          \State $p \gets t.center$, slightly perturbed (if needed)
          \State $k \gets $ numbers of holes $p$ falls in
          \If{$k > 0$}
              \ForAll{ hole $h$ that include $p$}
                  \State $\curly{H} \gets \curly{H} \cup \{h\}$ \\
              \EndFor
          \EndIf
      \EndFor

      \State Solve for Plugs:
      \State $\curly{P} \gets \emptyset$
      \ForAll{$h \in \curly{H}$}
          \State Solve Linear System for plug widths for all the plugs in hole $h$
          \State Construct Town $\pi_i$ from each plug
          \State Add plugs $\pi_i$ to $\curly{P}$
      \EndFor
      \State $\curly{T} \gets \curly{T} \cup \curly{P}$ \\

      \State Create Gaps:
      \For{$ t \in \widehat{\curly{T}}$}
          \State Find gap around center of $t$:
          \State $gap_s, gap_e \gets $ GetGap($t$) \\

          \State Get new function $\psi$ value:
          \State $nxt \gets $ next town after $t$
          \State new slope $\gets \max\{ \frac{1}{2},\,\frac{1}{2}($slope between towns $t$ and $nxt)\}$
          \State $v \gets t.val + ($new slope$)(gap_e - gap_s)$ \\

          \State Break $t$ into two new Towns $tl,\,tr$:
          \State $tl \gets $Town($t.start, gap_s, t.val)$
          \State $tr \gets $Town($gap_e, t.end, v)$
          \State $\curly{T} \gets \curly{T} \cup \{tl,\,tr\}$
      \EndFor

  \EndWhile
\EndProcedure
\end{algorithmic}
\end{algorithm}

\subsection{Finding Holes}

Finding which holes need plugs is straightforward. Take $\curly{T}_j$ defined as before at refinement level $j \in \N$.
For each $t \in T = \{t \in \curly{T}_j \,\,:\,\, \lvert t \rvert \ge \theta^j \}$ and for $p$ the breakpoint of $t$, we define
\begin{equation*} \begin{split}
\widehat{T}_j &= \{t \in \curly{T}_j \,\,:\,\, \exists q \in \{-2n,\dots,2n\} \text{ such that } p - q \epsilon \in t \} \\
\widehat{Q}_j &= \{ q \in \{-2n,\dots,2n\} \,\,:\,\, \exists t \in \curly{T}_j \text{ such that } p - q \epsilon \in t \} \\
\widehat{Q}^\emptyset_j &= \{ q \in \{-2n,\dots,2n\} \,\,:\,\, \forall t \in \curly{T}_j,\, p - q \epsilon \notin t \text{ and } p - q \epsilon \in [-1,1] \}.
\end{split} \end{equation*}
The break point $p$ is the midpoint of $t$, unless there is some other break point that is an integer multiple of $\epsilon$ away from $p$; in this case, perturb $p$ by some small rational amount. Denote the set of break points $$\widehat{P}_j = \{p \text{ break point of } t \,\,:\,\, t \in T\}.$$
If $\lvert \widehat{Q}_j\rvert < 2n+1$, then $p$ falls in a hole, denoted as $h_p$, defined by the open interval between the two closest intervals. Let $\curly{H}$ be the set of all holes to plug. In the case $\lvert \widehat{Q}^\emptyset_j \rvert > 1$, we add multiple holes to $\curly{H}$, and that a hole in $\curly{H}$ might contain multiple plugs, if there are multiple break points who, once shifted, fall into that hole.

\subsection{Solve for Plugs}

For each hole $h  \in \curly{H}$, we proceed as follows. Let $\nu \ge 1 \in \N$ be the number of points who, once
shifted, fall into hole $h$, denoted by its endpoints $h = (b_0, a_{\nu+1})$.
Note that $\psi_j$ is linear on $h$; suppose its slope is $m$. We wish to construct $\nu$ ``plugs", i.e. closed intervals denoted $\pi_i = [a_i, b_i] \subset h$, for $1 \le i \le \nu$, such that for each $p_i$ with appropriate shift index $q_i$,
$$p_i - q_i \epsilon \in \pi_i,$$
and for $i_1 \ne i_2 \in \{1,\dots,\nu\}$, $$\pi_{i_1} \cap \pi_{i_2} = \emptyset.$$
For simplicity, we define $\widehat{p}_i = p_i - q_i \epsilon.$
The plugs $\pi_i$ are constrained so that $\psi_{j+1}(\pi_i) = \psi_j(\widehat{p}_i)$ and between plugs, $\psi_{j+1}$ has slope $\widehat{m} = 1 - 2^{-j-1}$.
Denote
\begin{equation*} \begin{split}
f_0 &= \psi_j(b_0) \\
f_i &= \psi_j(\widehat{p}_i) \qquad 1 \le i \le \nu \\
f_{\nu+1} &= \psi_j(a_{\nu+1}). \\
\end{split} \end{equation*}
The slope constraints provide $\nu+1$ equations
$$ \widehat{m} (a_i - b_{i-1}) = f_i - f_{i-1} \qquad 1 \le i \le \nu + 1.$$
Since there are $2\nu$ variables but $\nu+1$ constraints, between each plug we enforce a symmetry constraint\footnote{We could have employed other constraints.}
 on $\psi_{j+1}$, that
$$b_i - \widehat{p}_i = \widehat{p}_{i+1} - a_{i+1} \qquad 1 \le i \le \nu-1.$$
Fig.~\ref{fig:plugs} illustrates this setup in the case $\nu = 2$.
\begin{center}
\begin{figure}[tb]
\begin{tikzpicture}
\draw [blue, thick] (3,1) -- (4, 2) -- (4.5833, 2) -- (6.0833, 3.5) -- (6.5, 3.5) -- (7,4) ;
\draw [help lines] (3,0) -- (3,1) ;
\draw [help lines] (4,0) -- (4,2) ;
\draw [help lines] (4.3333, 0) -- (4.3333, 2) ;
\draw [help lines] (4.5833, 0) -- (4.5833, 2) ;
\draw [help lines] (6.0833, 0) -- (6.0833, 3.5) ;
\draw [help lines] (6.3333, 0) -- (6.3333, 3.5) ;
\draw [help lines] (6.5, 0) -- (6.5, 3.5) ;
\draw [help lines] (7, 0) -- (7, 4) ;
\draw [thick] (0,1) -- (3,1) -- (7,4) -- (10,4) ;
\draw [help lines] (0,0) -- (10,0) ;
\draw [line width = 6] (0,0) -- (3,0) ;
\draw [line width = 6] (7,0) -- (10,0) ;
\draw [blue, line width = 6] (4,0) -- (4.5833,0) ;
\draw [blue, line width = 6] (6.0833,0) -- (6.5,0) ;
\draw [help lines] (0,2) -- (3.9,2) ;
\draw [help lines] (0,3.5) -- (5.9833,3.5) ;
\draw [help lines] (0,4) -- (6.9,4) ;
\node [below] at (3,-0.1) {$b_0$} ;
\node [below] at (4,-0.1) {$a_1$} ;
\node [below] at (4.3333,-0.5) {$\widehat{p}_1$} ;
\node [below] at (4.5833,-0.1) {$b_1$} ;
\node [below] at (6.0833,-0.1) {$a_2$} ;
\node [below] at (6.3333,-0.5) {$\widehat{p}_2$} ;
\node [below] at (6.5,-0.1) {$b_2$} ;
\node [below] at (7,-0.1) {$a_3$} ;
\node [left] at (-0.5,1) {$f_0$} ;
\node [left] at (-0.5,2) {$f_1$} ;
\node [left] at (-0.5,3.5) {$f_2$} ;
\node [left] at (-0.5,4) {$f_3$} ;
\end{tikzpicture}
\caption{Sketch of scenario for finding two plugs, with $\psi_j$ in black and $\psi_{j+1}$ in blue. Note the symmetry constraint $b_1 - \widehat{p}_1 = \widehat{p}_2 - a_2$ is enforced.}
\label{fig:plugs}
\end{figure}
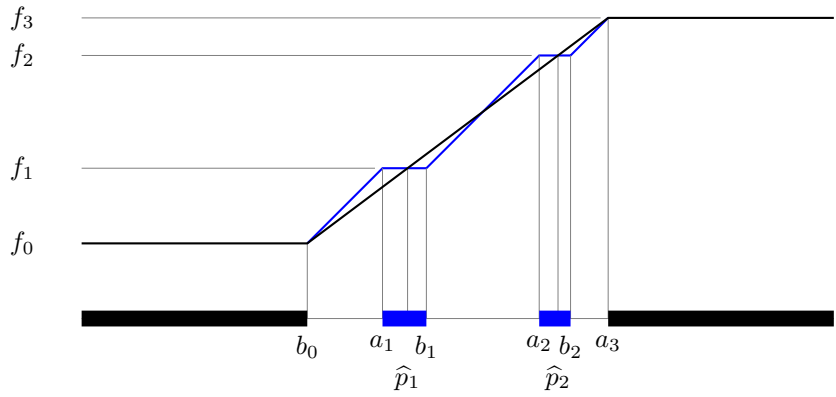
\end{center}

This provides the linear equation $Cx = z,$ where
\begin{equation*} \begin{split}
  \begin{matrix*}[r] 
      \begin{matrix*}[r]    
        \nu+1 \text{ slope equations }
          \left\{\vphantom{\begin{matrix}  \\ \\\\ \\ \\ \\ \end{matrix}}\right. \\
        \nu-1 \text{ symmetry equations }
          \left\{\vphantom{\begin{matrix}  \\ \\ \\ \\ \end{matrix}}\right. \\
      \end{matrix*}
    &
  \left( \begin{array}{ccccccccc}
      \widehat{m} &               &             &          &                &             &              \\
                  & -\widehat{m}  & \widehat{m} &          &                &             &              \\
                  &               &             &  \ddots  &                &             &              \\
                  &               &             &          & -\widehat{m}   & \widehat{m} &              \\
                  &               &             &          &                &             & -\widehat{m} \\
                  \hline \\[-\normalbaselineskip]
      0 & 1 & 1 & 0         &   &   &   \\
        &   &   &   \ddots  &   &   &   \\
        &   &   &         0 & 1 & 1 & 0 \\
      \end{array} \right)
  \end{matrix*} &= C, \\
  \begin{matrix*}[r] 
      \begin{matrix*}[r]    
        \nu+1 \text{ slope equations }
          \left\{\vphantom{\begin{matrix}  \\ \\\\ \\ \\ \\ \end{matrix}}\right. \\
        \nu-1 \text{ symmetry equations }
          \left\{\vphantom{\begin{matrix}  \\ \\ \\ \\ \end{matrix}}\right. \\
      \end{matrix*}
    & \qquad \qquad
      \left(\begin{array}{c}
      f_1 - f_0 \\
      \phantom{a} \\
      \vdots \\
      \phantom{a} \\
      f_{\nu+1} - f_\nu \\
            \hline \\[-\normalbaselineskip]
      \widehat{p}_1 + \widehat{p}_2 \\
      \vdots \\
      \widehat{p}_{\nu-1} + \widehat{p}_\nu
      \end{array}\right)  +
      \begin{pmatrix}
      \widehat{m} b_0 \\ \phantom{\vdots}\\ \\ \\ -\widehat{m} a_{\nu+1} \\ \\ \phantom{\vdots} \\ \\
      \end{pmatrix}
  \end{matrix*}  &= z, \\
  \begin{pmatrix}
  a_1 \\ b_1 \\ \vdots \\ a_i \\ b_i \\ \vdots \\ a_n \\ b_n
  \end{pmatrix} &=x.
\end{split} \end{equation*}
Permuting the rows of $C$ creates a block diagonal matrix; since each block is invertible, $C$ is invertible, so a unique solution exists. Given $\psi_{j}$ is monotonic increasing, it is easy to show that the plugs $\pi_i$ are well-defined and do not overlap. On each plug, assign function values
$$\psi_{j+1}(\pi_i) = \psi_j(p_i).$$
For each $h$, add the plugs $\pi_i$ to $\curly{T}_j$.

\subsection{Create Gaps}

Fix $\alpha = 2/3$ and $\beta =1/3$.
For each $p \in \widehat{P}_j$, we proceed as follows. We know $\forall q \in \{-2n,\dots, 2n\}$, if $p - q \epsilon \in [-1,1]$, then $$\exists t^q \in \curly{T}^q_j \text{ such that } p \in t^q.$$

Let $t = t^0 = [a,b]$, and define
\begin{equation*} \begin{split}
\rho_+ &= \min_{ t^q = [a^q, b^q] \in \widehat{T}_j }\{ (p - q \epsilon) - a^q \,\,:\,\,  p-q\epsilon \in t^q \} \\
\rho_- &= \min_{ t^q = [a^q, b^q] \in \widehat{T}_j }\{ b^q - (p - q \epsilon) \,\,:\,\,  p-q\epsilon \in t^q \} \\
\end{split} \end{equation*}

To avoid creating gaps that overlap, we also define
\begin{equation*} \begin{split}
  \delta_+ &= \min_{ \substack{ \tilde{p} \in \widehat{P}_j \\ q \in \{-2n,\dots,2n\} \\ \tilde{q} \in \{-2n,\dots,2n\} } }\{ (p - q \epsilon) - (\tilde{p} - \tilde{q}\epsilon) \,\,:\,\,  p - q \epsilon > \tilde{p} - \tilde{q}\epsilon  \} \\
  \delta_- &= \min_{ \substack{ \tilde{p} \in \widehat{P}_j \\ q \in \{-2n,\dots,2n\} \\ \tilde{q} \in \{-2n,\dots,2n\} } }\{ (\tilde{p} - \tilde{q}\epsilon) - (p - q \epsilon) \,\,:\,\,  p - q \epsilon < \tilde{p} - \tilde{q}\epsilon \} \\
\end{split} \end{equation*}

Set $$\rho = \min\{ \alpha \rho_+,\, \alpha \rho_-,\, \beta \delta_+,\, \beta \delta_-\}.$$
Take $g = ( p - \rho,\, p + \rho)$. This guarantees $(g - q \epsilon \cap [-1,1]) \subset t^q \,\,\forall q \in \{0,\dots,2n\}$.
Break $t$ into two new intervals, $t_- = [a, p - \rho]$ and $t_+ = [p + \rho, b]$. Then,
$$t = t_- \cup g \cup t_+, \qquad p \in g.$$

Let $t_n$ be the next interval greater than $t$. Assign function values
\begin{equation*} \begin{split}
 \psi_{j+1}(t_-) &= \psi_j(t) \\
 \psi_{j+1}(t_+) &= \psi_j(t) + \eta \\
\end{split} \end{equation*}
where $$\eta = \min\left\{ \rho,\, \frac{1}{2}(\psi_j(t_n) - \psi_j(t))\right\}.$$
This creates $\curly{T}_{j+1}$ from $\curly{T}_j$ by replacing $t$ with $t_-$ and $t_+$ for each $p \in \widehat{P}_j$.

\subsection{Analysis of Inner Function $\psi$}

\begin{claim} At each $j \in \N$, the system of towns $\curly{T}_j$ satisfies lemma \ref{lemma:criterion}
\end{claim}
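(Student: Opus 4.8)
The plan is to argue by induction on the refinement level $j$, showing at each step that the four conditions of lemma~\ref{lemma:criterion} are preserved by one pass of Algorithm~\ref{alg:Lip}. For the base case $j=0$ we have $\curly{T}_0 = \{[-1,1]\}$ and $\psi_0\equiv 0$: condition~(4) holds since the maximal slope is $0 = 1 - 2^{-0}$; condition~(3) is vacuous as there is only one interval; condition~(2) holds because for $\epsilon \le 1/2n$ and every $q\in\{0,\dots,2n\}$ one has $q\epsilon \in [0,1]$, so each $x\in[0,1]$ lies in all $2n+1$ translated intervals $[-1+q\epsilon,1+q\epsilon]$; and condition~(1) is a statement about the limit $j\to\infty$, which I track through the diameter bookkeeping below. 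Assuming the four conditions hold for $\curly{T}_j$ and $\psi_j$, I would establish them for $\curly{T}_{j+1}$ and $\psi_{j+1}$, treating the conditions in increasing order of difficulty.

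Conditions (4), (3), and (1) are the routine ones. For (4): on every unbroken piece $\psi_{j+1}=\psi_j$ has slope at most $1-2^{-j} < 1-2^{-(j+1)}$; on each plug the slope is exactly $\widehat{m}=1-2^{-(j+1)}$ by construction; and on each newly created gap the slope is $\eta/(2\rho)\le \rho/(2\rho)=\tfrac12 \le 1-2^{-(j+1)}$ since $\eta\le\rho$. Hence the maximal slope of $\psi_{j+1}$ is $1-2^{-(j+1)}=\widehat{m}_{j+1}$, realized on the plugs. For (3), I would observe that $\psi_j$ is nondecreasing, flat on each interval and strictly increasing across each gap, so that each $\psi_j(t)$ is a single value and distinct intervals receive distinct values; this is preserved because breaking assigns $\psi_{j+1}(t_-)=\psi_j(t) < \psi_j(t)+\eta=\psi_{j+1}(t_+)$ with $\eta>0$ (using strict monotonicity from the inductive hypothesis and the cap $\eta\le\tfrac12(\psi_j(t_n)-\psi_j(t))$ that prevents overshooting the next town's value), while the invertible block-diagonal system $Cx=z$ places the plugs in increasing order within each hole with distinct assigned values. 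For (1), each broken interval of length $L\ge(1/2)^j$ is replaced by two pieces of length $L/2-\rho < L/2$, and since the threshold $(1/2)^j\to 0$ every interval is eventually broken and at least halved; bounding the sizes of the inserted plugs (which lie inside shrinking holes) then gives $\sup_{t}\mathrm{Diam}(t)\to 0$.

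The heart of the argument, and the main obstacle, is condition~(2): preserving the property that every $x\in[0,1]$ lies in at least $2n$ towns. Here I would exploit that all towns are translates of the single base town, so breaking the base interval with center $p$ simultaneously opens a gap at $p+q\epsilon$ in town $q$, and $x$ loses town-$q$ coverage precisely when $x-q\epsilon$ enters a new base gap. The mechanism to control this is plug-then-break: for each breakpoint $p\in\widehat{P}_j$ the Find-Holes and Solve-for-Plugs stages insert, for every shift $q$ with $p-q\epsilon\in[-1,1]$ at which $p$ was uncovered, a plug containing $\widehat{p}=p-q\epsilon$, raising the coverage of $p$ to the full $2n+1$ towns before any interval is broken. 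The perturbation of breakpoints that are integer multiples of $\epsilon$ apart is what makes all shifted breakpoints $\{p-q\epsilon\}$ distinct, so that $\delta_\pm>0$ and the plug system is well posed. I would then show that breaking removes exactly one town (town $0$) from the coverage of each $p$: the choice $\rho\le\alpha\rho_\pm$ keeps the translated gap $(g-q\epsilon)\cap[-1,1]$ inside the single interval $t^q$ that already contained $p$, and the choice $\rho\le\beta\delta_\pm$ keeps the gap from reaching any other shifted breakpoint, so no second town is lost at $p$. Thus each breakpoint retains $\ge 2n$ towns, and the same gap-localization estimates bound the loss at every other point.

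The step I expect to be genuinely delicate is this last one: verifying that the two gap-size constraints truly prevent any \emph{simultaneous} loss of coverage from two different towns at a single point, including points that are not breakpoints but happen to lie close to several shifted breakpoints $p+q\epsilon$ at once. This is where the interaction between the $\rho_\pm$ constraints (which localize a gap within its own interval) and the $\delta_\pm$ constraints (which separate gaps belonging to different breakpoints and shifts) must be combined carefully; once this localization is established, condition~(2) follows, and with the three routine conditions already in hand the induction closes.
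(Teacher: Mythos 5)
Your induction for condition (2) --- insert plugs first so every breakpoint regains full $(2n{+}1)$-fold coverage, then localize each new gap via the $\rho_\pm$ and $\delta_\pm$ constraints so that breaking costs each point at most one town --- is exactly the paper's argument for this claim, and your treatment of conditions (1), (3), (4) matches what the paper establishes in the two lemmas immediately following (monotonicity, slope bound $1-2^{-j}$, and the ``by construction'' diameter bound). The step you flag as delicate is resolved by precisely the mechanism you name: because $\rho \le \beta\delta_\pm$ with $\beta = \tfrac{1}{3}$ and the $\delta$'s minimize over all shift pairs $q,\tilde q \in \{-2n,\dots,2n\}$, two shifted gaps centered at $p_1+q_1\epsilon$ and $p_2+q_2\epsilon$ have half-widths summing to at most $\tfrac{2}{3}$ of their separation and hence cannot both contain a common point --- the paper asserts this disjointness of gap closures without spelling out the computation.
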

\begin{proof}
  By construction, $\sup_{t \in \curly{T}_j} Diam(t) \le c\, \theta^j,$ where $\theta \in (0,1)$ and $c \in \R > 0$ independent of $j$. We will show by induction that $\forall j \in \N$, we have $\forall x \in [0,1]$, $x$ is contained in at least $2n$ towns $\curly{T}^q_j$.
  \begin{itemize}
\item Base Case: $\curly{T}^q_0 = \{[-1 + q \epsilon, 1 + q \epsilon]\}.$ Fix $x \in [0,1]$. Since $\epsilon \in (0,\frac{1}{2n}]$, we have  $\forall q \in \{0,\dots,2n\}$, $x \in [-1+q\epsilon, 1 + q \epsilon]$.
\item Inductive Step: Suppose $\forall x \in [0,1]$, this claim holds true through refinement level $j \in \N$. Then, it still holds true after adding in the plugs at refinement level $j$, since we have added more intervals to $\curly{T}_j$ whilst not removing any gaps. Fix $p \in \widehat{P}_j$, and let $g$ be the gap containing point $p$ that we create at this refinement level. By construction, $\forall q \in \{0,\dots,2n\}$, we have $g \subset t^q$ for some $t^q \in \curly{T}^q_j$.
By construction, $\forall p_1, p_2 \in \widehat{P}_j$ with gaps $g_1$ and $g_2$, we have $\bar{g}_1 \cap \bar{g}_2 = \emptyset$. Therefore, any point $x \in [0,1]$ that is in a gap we remove is still covered by the families $\curly{T}^q_{j+1}$, where $q \in \{1,\dots,2n\}$, and any other point is still covered by some interval in at least $2n$ of the towns, so the inductive step holds.
  \end{itemize}
\end{proof}

\begin{claim} The function $\psi = \lim_{j \rightarrow \infty} \psi_j$ is Lipschitz continuous on $[-1,1]$ with constant 1.
\end{claim}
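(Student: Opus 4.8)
The plan is to prove the claim in two stages: first that each approximant $\psi_j$ is Lipschitz with constant $\widehat{m}_j = 1 - 2^{-j} \le 1$, and second that $\psi_j \to \psi$ uniformly on $[-1,1]$. Granting both, the conclusion is immediate: for any $x,y \in [-1,1]$ we have $\lvert \psi_j(x) - \psi_j(y)\rvert \le (1-2^{-j})\lvert x-y\rvert$, and letting $j \to \infty$ (using uniform, hence pointwise, convergence on the left and $1-2^{-j} \to 1$ on the right) yields $\lvert \psi(x) - \psi(y)\rvert \le \lvert x-y\rvert$. In short, a uniform limit of functions that are equi-Lipschitz with constants tending to $1$ is itself $1$-Lipschitz.

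The first stage is condition 4 of Lemma~\ref{lemma:criterion}. Each $\psi_j$ is continuous, monotone increasing, and piecewise linear: it is constant on every town and plug, and on the gaps it has exactly the slope prescribed by the construction. Since the Create Gaps step assigns each new gap a slope $\eta/(2\rho) \le \tfrac12$ (as $\eta \le \rho$, because $\eta = \min\{\rho,\tfrac12(\psi_j(t_n)-\psi_j(t))\}$), while the Solve for Plugs step assigns slope $\widehat{m} = 1 - 2^{-j-1}$ between plugs, the largest slope present in $\psi_{j+1}$ is $1 - 2^{-(j+1)}$. A piecewise-linear function whose one-sided slopes are all bounded by $1-2^{-j}$ is $(1-2^{-j})$-Lipschitz, and I would record this bound by a short induction on $j$ matching Lemma~\ref{lemma:criterion}.

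The second stage is where the real work lies, and the step I expect to be the main obstacle. I would bound $\norm{\psi_{j+1} - \psi_j}_\infty$ and show it decays geometrically in $j$, so that $\{\psi_j\}$ is uniformly Cauchy and therefore converges uniformly to a continuous limit $\psi$. The key structural fact is that passing from $\psi_j$ to $\psi_{j+1}$ perturbs the function only inside the broken intervals $t \in \widehat{\curly{T}}_j$ together with their adjacent gaps and the newly inserted plugs, and these regions are pairwise disjoint (the gaps satisfy $\bar g_1 \cap \bar g_2 = \emptyset$, as used in the preceding claim). On a broken town $t$ the function $\psi_j$ is constant while $\psi_{j+1}$ rises by at most $\eta \le \rho$; on a plug placed in a hole where $\psi_j$ is linear of slope $m$, the value is reset to $\psi_j(p_i)$ and the deviation is controlled by the hole width times $m$. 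Because $\rho \le \alpha \rho_\pm$ with each $\rho_\pm$ at most the half-diameter of an interval, and $\sup_{t \in \curly{T}_j} \mathrm{Diam}(t) \le c\,\theta^j$ from the proof of the preceding claim, every local perturbation is $O(\theta^j)$, giving $\norm{\psi_{j+1} - \psi_j}_\infty \le C\,\theta^j$ for a constant $C$ independent of $j$.

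The delicate point is verifying that these perturbations stay genuinely local and do not accumulate across $[-1,1]$: one must check that the bump $\eta$ introduced on $t_+$ is absorbed within the gap following $t$ rather than shifting every subsequent town's value, and that on each inter-town gap the interpolant of $\psi_{j+1}$ (through the plug heights $\psi_j(\widehat{p}_i)$) differs from that of $\psi_j$ by only $O(\theta^j)$ pointwise. Once the geometric bound $\norm{\psi_{j+1} - \psi_j}_\infty \le C\,\theta^j$ is secured, summing over $j$ gives a convergent geometric series since $\theta \in (0,1)$, hence uniform convergence to a continuous $\psi$; combining this with the first stage completes the proof.
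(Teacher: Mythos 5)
Your proposal is correct and follows essentially the same route as the paper: the paper likewise splits the argument into a lemma establishing uniform convergence via the geometric bound $\norm{\psi_{j+1}-\psi_j}_\infty \le c\,\theta^j$ (Weierstrass M-test) and an inductive lemma showing each $\psi_j$ is piecewise linear with slopes at most $1-2^{-j}$, then combines the two. Your final step --- passing to the limit in $\lvert\psi_j(x)-\psi_j(y)\rvert \le (1-2^{-j})\lvert x-y\rvert$ --- is in fact a slightly cleaner justification than the paper's appeal to the ``slope of the limit'' being bounded.
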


\begin{proof}
Clearly, $\psi_j$ is continuous for $j \in \N$. We will show the following two lemmas.

\begin{lemma} The function $\psi = \lim_{j \rightarrow \infty} \psi_j$ is well-defined, with convergence in the sup norm.
\end{lemma}

\begin{proof}
Let $G_j = \{ g_p \,\,:\,\, g_p \text{ gap containing } p \in \widehat{P}_j \}$ be the set of all gaps and $\Pi_j =
\{\pi \,\,:\,\, \pi \text{ plug added at step } j \}$ be the set of all plugs. We have
\begin{equation*} \begin{split}
\norm{\psi_{j+1} - \psi_j}_\infty &\le \max\left\{ \sup_{g \in G_{j+1}} \widehat{m}_j \lvert g_p \rvert , \sup_{\pi \in \Pi_{j+1}} \lvert \pi \rvert \right\}\\
&< \sup_{g \in G_{j+1}} \lvert g_p \rvert \\
&< \sup_{t \in \curly{T}_j} \mathrm{Diam}(t) \\
&\le c\, \theta^j,
\end{split} \end{equation*}
where $\theta \in (0,1)$ and $c \in \R > 0$ is independent of $j$. Therefore, by the Weierstrass M-test, the function
$$\psi = \lim_{j \rightarrow \infty} \psi_j - \psi_0 =  \lim_{j \rightarrow \infty} \sum_{k=1}^j \psi_k - \psi_{k-1}$$ is well-defined.
\end{proof}

\begin{lemma} Assume that $\psi_j$ is monotonic increasing, constant on each interval $t \in \curly{T}_j$, and linear between such intervals with slope $\le 1 - 2^{-j}$.
Then, $\psi_{j+1}$ is also monotonic increasing, constant on each interval $t \in \curly{T}_{j+1}$, and linear between
such intervals, with slope $\le 1 - 2^{-j-1}$.
\end{lemma}

\begin{proof}
By construction, $\psi_{j+1}$ is constant on each interval. Between intervals, we interpolate linearly.
For each gap $g$ formed between intervals from $\curly{T}_{j+1}$, let $t$ be the interval to the left of $g$, and $t_n$ be to the right. Exactly one of the following three cases must be true for each $g$:
\begin{enumerate}
  \item The same gap $g$ existed between intervals $t$ and $t_n$ at refinement level $j$.
  \item At least one of $t$ or $t_n$ is a plug created at this refinement level.
  \item Splitting the interval created the gap $g$ that we see at this refinement level.
\end{enumerate}
In each case, $\psi_{j+1}$ maintains the desired properties:
\begin{enumerate}
  \item $\psi_{j+1}$ does not differ from $\psi_j$ on $g$, and thus by the inductive hypothesis, $\psi_{j+1}$ maintains the desired properties.
  \item From the construction of our linear system, we have that $\psi_{j+1}(t) < \psi_{j+1}(t_n)$, enforcing monotonicity, and the size of the gap was chosen so that on $g$, we have the slope set to $m = 1 - 2^{-j-1}.$
  \item Since $\rho > 0$ and $\psi_j$ is monotonic increasing, $\eta > 0$, so $\psi_{j+1}(t) < \psi_{j+1}(t_n).$ The value $\eta$ was chosen so that the slope on $g$ is $$m = \frac{\eta}{2 \rho} \le \frac{1}{2}  < 1 - 2^{-j-1}.$$
\end{enumerate}
\end{proof}

Since the uniform limit of a continuous function is continuous, and the slope of the limit is bounded,  $\psi$ is Lipschitz continuous with constant 1.
\end{proof}

\section{Results}

We implemented Algorithm~\ref{alg:Lip} in Python 2.7 using the \texttt{mpmath} package for extended precision accuracy. We used the package \texttt{intervaltree} to provide an interval tree data structure, to efficiently store the system of towns. This code was executed (in serial) on a Razer Blade computer with an i7 processor. The families of towns in Fig.~\ref{fig:towns} are produced during the first four iterations.
\begin{figure}[p]
    \hspace*{\fill}%
  \subcaptionbox{$j = 1$}{\includegraphics[width=\textwidth, height=0.2\textheight]{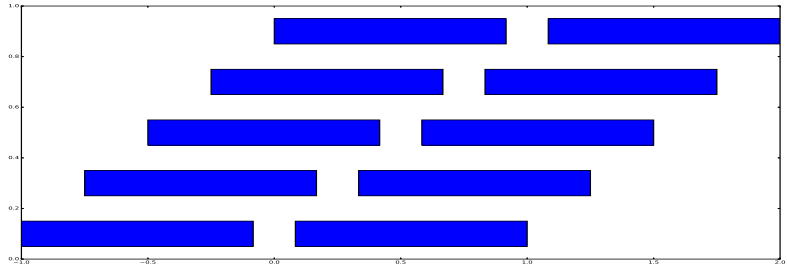}}
  \hspace*{\fill} \\
  \hspace*{\fill}%
  \subcaptionbox{$j = 2$}{\includegraphics[width=\textwidth, height=0.2\textheight]{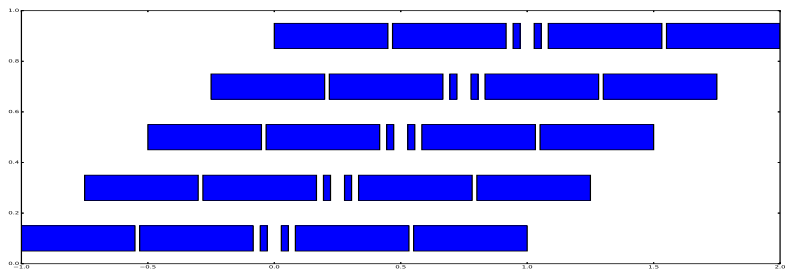}}%
    \hspace*{\fill} \\
    \hspace*{\fill}%
  \subcaptionbox{$j = 3$}{\includegraphics[width=\textwidth, height=0.2\textheight]{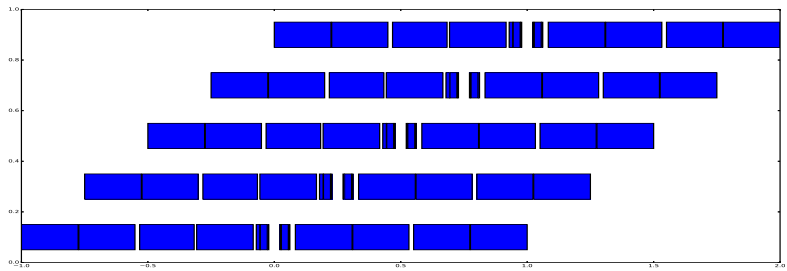}}%
  \hspace*{\fill} \\
  \hspace*{\fill}%
  \subcaptionbox{$j = 4$}{\includegraphics[width=\textwidth, height=0.2\textheight]{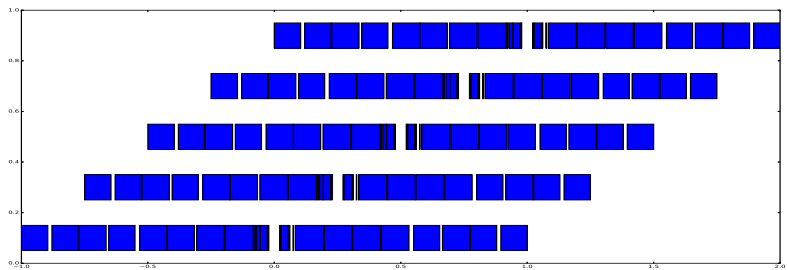}}%
    \hspace*{\fill}%
\caption{Families of towns, with shifts shown, after the first four iterations of refinement.}
\label{fig:towns}
\end{figure}

Taking a closer look at these families, we observe that there is at most one family that has a gap for any point in $[0,1]$; this is verified for iterations $j=3,\,4$ in Fig.~\ref{fig:gapcheck}. As in Fig.~\ref{fig:towns}, we can clearly see the lengths of the largest intervals are approximately halved between iterations, even if the gaps created are small.
\begin{figure}[tb]
  \hspace*{\fill}%
\subcaptionbox{$j = 3$}{\includegraphics[width=\textwidth, height=0.2\textheight]{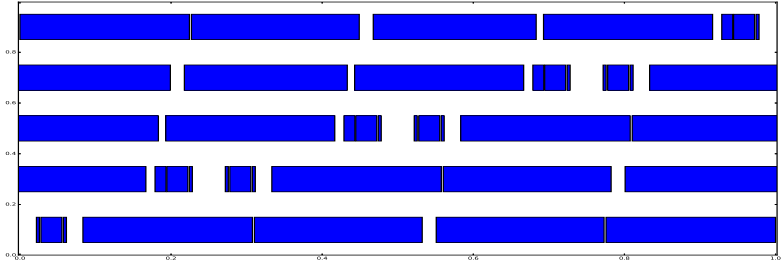}}%
\hspace*{\fill} \\
\hspace*{\fill}%
\subcaptionbox{$j = 4$}{\includegraphics[width=\textwidth, height=0.2\textheight]{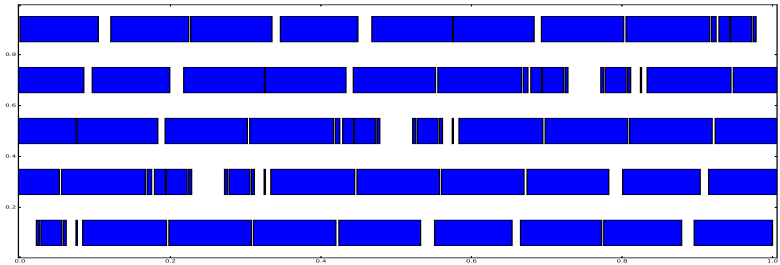}}%
  \hspace*{\fill}%
\caption{Families of towns from refinement levels $j=3,\,4$, focusing on the interval $[0,1]$.}
\label{fig:gapcheck}
\end{figure}
We produce an inner function $\psi$ as in Fig.~\ref{fig:psi}.

\begin{figure}[tb]
  \includegraphics[width=\textwidth]{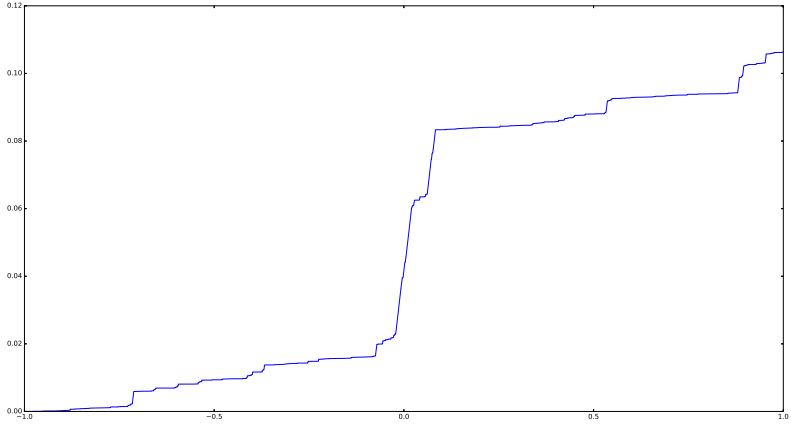}
  \caption{Function $\psi$ generated after 11 iterations.}
  \label{fig:psi}
  \end{figure}

\section{Discussion}

This algorithm improves upon the smoothness of previous versions of inner KST functions, such as the H\"older continuous
versions proposed by K\"oppen~\cite{koppen} and Braun and Griebel~\cite{braun-griebel}. It has been
argued~\cite{griebel-highdim} that the functions underlying the Kolmogorov representation lack sufficient regularity for
efficient resolution of functions for real-world applications. However, the Lipschitz regularity of our representation,
which we will extend to the outer functions in an upcoming publication, should answer these concerns. We envision that
this constructive version of the Fridman $\psi$ function, paired with an efficient method of constructing for the outer
functions $\chi^q$, will enable us to practically compute the Kolmogorov representation of multivariate functions. Our
construction opens the door to a wide variety of applications, such as encryption~\cite{Liu2015}, content-based image
and video retrieval~\cite{Bryant2008}, and image compression~\cite{LeniFougerolleTruchetet2010}.

\appendix
\section*{Appendix: Proof of Kolmogorov Superposition Theorem}
We conclude the rest of the proof of the Kolmogorov Superposition Theorem.
\\

Define by induction the functions $\chi^q$ as $$\chi^q = \lim_{r \rightarrow \infty} \chi^q_r,$$ with an initialization $\chi^q_0 \equiv 0$.
Induction on $r$ relates to refinement level $j_r \in \N$. We define $$f_r(x) = \sum_{q=0}^{2n} \chi^q_r \left( \Psi^q(x) \right), \tab M_r = \norm{f - f_r}_\infty.$$

For the base case $r = 0$, we get $f_0 \equiv 0$ and $M_0 = \norm{f}_\infty$. Suppose we constructed the continuous function $\chi^q_{r-1}$ by induction, having defined some $j_{r-1} \in \N$ and a continuous function $f_{r-1}$. Choose $k_r$ such that the oscillation of $f - f_{r-1}$ is bounded by $\frac{1}{2n+2}M_{r-1}$ on any specific $S^q_{j_r;i_1,\dots,i_n}$. \\

Fix $\xi^q_{j_r} = (\xi^q_{j_r,i_1}, \dots, \xi^q_{j_r,i_n}) \in S^q_{j_r;i_1,\dots,i_n}$. For $y \in \Psi^q(S^q_{j_r;i_1,\dots,i_n})$, define
$$\chi^q_r(y) = \chi^q_{r-1}(y) + \frac{1}{n+1} \left[ f(\xi^q_{j_r}) - f_{r-1}(\xi^q_{j_r})\right].$$

Rearranging, this gives us $\norm{\chi^q_r- \chi^q_{r-1}}_\infty \le \frac{1}{n+1} M_{r-1}$ when restricting $\chi^q_r,\,\chi^q_{r-1}$ to  $\Psi^q(S^q_{j_r;i_1,\dots,i_n}).$ Outside of $\Psi^q(S^q_{j_r;i_1,\dots,i_n})$, we choose $\chi^q_r$ such that we maintain continuity and that $\norm{\chi^q_r- \chi^q_{r-1}}_\infty \le \frac{1}{n+1} M_{r-1} $ ; we know we can do so by the Tietze Extension Theorem.

Fix $x = (x_1,\dots,x_n) \in \I^n$. Then,
$$f(x) - f_r(x) = f(x) - f_{r-1}(x) - \sum_{q=0}^{2n} \chi^q_{r}\left( \Psi^q(x)\right) - \chi^q_{r-1}\left( \Psi^q(x) \right).$$
Let $\curly{Q}_1 = \left\{ q \in \{0,\dots,2n\} \,\,\vert\,\, x \in S^q_{j_r;i_1,\dots,i_n} \text{ for some indices }i_1,\dots,i_n \right\} \subset \{0,\dots,2n\}$,
and $\curly{Q}_2 = \{0,\dots,2n\} \backslash \curly{Q}_1$. By lemma \ref{lemma:cubes}, $\lvert \curly{Q}_1 \rvert = n+1,\,\lvert \curly{Q}_2 \rvert = n.$

For $q \in \curly{Q}_1,$
\begin{equation*}\begin{split}
\chi^q_{r}\left( \Psi^q(x) \right) - \chi^q_{r-1}\left( \Psi^q(x) \right) &= \frac{1}{n+1}\left[ f(\xi^q_{j_r}) - f_{r-1}(\xi^q_{j_r})  \right] \\
&= \frac{1}{n+1}\left[ f(x) - f_{r-1}(x)  \right] + \frac{\omega^q}{n+1},
\end{split} \end{equation*}
where $\omega^q$ relates to the oscillation of $f - f_{r-1}$ on the square $S^q_{j_r;i_1,\dots,i_n}$ that includes $x$. By above,
$$\lvert \omega^q \rvert \le \frac{1}{2n+2}M_{r-1}.$$

For $q \in \curly{Q}_2$, we recall our prior estimate $$\norm{\chi^q_r - \chi^q_{r-1}}_\infty \le \frac{1}{n+1}M_{r-1}.$$
Therefore,
\begin{equation*} \begin{split}
\norm{f - f_r}_\infty &\le  \frac{1}{n+1} \sum_{q \in \curly{Q}_1} \lvert \omega^q \rvert  +  \sum_{q \in \curly{Q}_2} \norm{\chi^q_r - \chi^q_{r-1}}  \\
&\le \frac{1}{2n+2}M_{r-1} + \frac{n}{n+1}M_{r-1} \\
&= \left(\frac{2n+1}{2n+2}\right) M_{r-1}.
\end{split} \end{equation*}
Therefore, $$M_r \le \frac{2n+1}{2n+2}M_{r-1}, \tab M_r \le \left( \frac{2n+1}{2n+2} \right)^r M_0.$$ Since for all $0 \le q \le 2n,$ we have
$$\norm{\chi^q_r - \chi^q_{r-1}}_\infty \le \frac{1}{n+1}M_{r-1},$$
we conclude that the sequence $\{\chi^q_r\}_{r \in \N}$ is a Cauchy sequence. Since the space of continuous functions on $I^n$ is complete, the function $\chi^q = \lim_{r \rightarrow \infty} \chi^q_r$ is well-defined and continuous. Since $M_r \rightarrow 0$ as $r \rightarrow \infty$, we conclude that $f_r \rightarrow f$, thus completing the proof of theorem \ref{theorem:KST}.

\section*{Acknowledgements}
JA would like to acknowledge support from the Ken Kennedy Institute Computer Science \& Engineering Enhancement
Fellowship, funded by the Rice Oil \& Gas HPC Conference. MGK would like to acknowlege partial support from NSF award
SI2-SSI: 1450339 and U.S. DOE Contract DE-AC02-06CH11357.

\bibliographystyle{siamplain}
\bibliography{KST}

\begin{thebibliography}{10}

\bibitem{arnold}
{\sc V.~I. Arnol'd}, {\em On the representation of functions of several
  variables as a superposition of functions of a smaller number of variables},
  Collected Works: Representations of Functions, Celestial Mechanics and KAM
  Theory, 1957-1965,  (2009), pp.~25--46.

\bibitem{braun-griebel}
{\sc J.~Braun and M.~Griebel}, {\em {On a constructive proof of Kolmogorov's
  superposition theorem}}, Constr. Approx., 30 (2009), pp.~653--675.

\bibitem{Bryant2008}
{\sc D.~W. Bryant}, {\em Analysis of kolmogorov's superpostion theorem and its
  implementation in applications with low and high dimensional data}, 2008.

\bibitem{fridman}
{\sc B.~L. Fridman}, {\em {Improvement in the smoothness of functions in the
  Kolmogorov superposition theorem}}, Dokl. Akad. Nauk SSR, 177:5 (1967),
  pp.~1019--1022.
\newblock English transl. Soviet Math. Dokl. 8, 6 (1967), 1550-1553.

\bibitem{griebel-highdim}
{\sc M.~Griebel}, {\em Sparse grids and related approximation schemes for
  higher dimensional problems}, SFB 611, 2005.

\bibitem{kolmogorov}
{\sc A.~N. Kolmogorov}, {\em {On the representation of continuous functions of
  several variables as superpositions of continuous functions of one variable
  and addition}}, Dokl. Akad. Nauk SSSR, 114:5 (1957), pp.~953--956.
\newblock English transl. Amer. Math. Soc. Transl. (2) 28 (1963), 55.

\bibitem{koppen}
{\sc M.~K\"oppen}, {\em On the Training of a Kolmogorov Network}, ICANN 2002,
  LNCS 2415, 2002, pp.~474--9.

\bibitem{LeniFougerolleTruchetet2010}
{\sc P.-E. Leni, Y.~D. Fougerolle, and F.~Truchetet}, {\em Kolmogorov
  superposition theorem and wavelets for image compression}, Wavelet
  Applications in Industrial Processing VII, Proceedings of the SPIE, 7535
  (2010), pp.~753502--753510.

\bibitem{Liu2015}
{\sc X.~Liu}, {\em Kolmogorov superposition theorem and its applications},
  2015.

\bibitem{lorentz}
{\sc G.~G. Lorentz}, {\em {The 13th Problem of Hilbert}}, Proceedings in
  Symposia of Pure Mathematics, 28 (1976), pp.~419--430.

\bibitem{sprecher1972}
{\sc D.~Sprecher}, {\em {An improvement in The Superposition Theorem of
  Kolmogorov}}, Journal of Mathematical Analysis and Applications, 38 (1972),
  pp.~208--213.

\bibitem{tikhomirov}
{\sc V.~M. Tikhomirov}, {\em Kolmogorov's work on $\epsilon$-entropy of
  functional classes and the superposition of functions}, Russian Mathematical
  Surveys, 18:5 (1963), pp.~51--87.

\bibitem{vitushkin}
{\sc A.~G. Vitushkin}, {\em Some properties of linear superpositions of smooth
  functions}, in Dokl. Akad. Nauk SSSR, vol.~156, 1964, pp.~1003--1006.

\bibitem{vitushkin-henkin}
{\sc A.~G. Vitushkin and G.~M. Henkin}, {\em Linear superpositions of
  functions}, Uspehi Mat. Nauk, 22 (1967), pp.~77--124.
\newblock English transl. Russian Math. Surveys, 22 (1967), 77-125.

\end{thebibliography}

\end{document}